\numberwithin{equation}{section}
\newtheorem{theorem}{Theorem}[section]
\newtheorem{lemma}{Lemma}[section]
\newtheorem{remark}{Remark}[section]
\newtheorem{proposition}{Proposition}[section]
\journal{Arxiv}
\begin{document}

\begin{frontmatter}



\title{Weak order in  averaging principle for stochastic wave
equations with  a fast oscillation }


\author{Hongbo Fu}
\ead{hbfuhust@gmail.com}
\author{Li Wan}
\ead{wanlinju@aliyun.com}
\address{Research Center of Nonlinear Science, College of Mathematics and Computer Science, Wuhan Textile University, Wuhan, 430073, PR China}

\author{Jicheng Liu}   
\ead{jcliu@hust.edu.cn}
\author{Xianming Liu\corref{cor1}}
\ead{xmliu@hust.edu.cn}
\address{School of Mathematics and Statistics, Huazhong University of Science and
Technology, Wuhan, 430074, PR China} 

\cortext[cor1]{Corresponding author at: School of Mathematics and
  Statistics, Huazhong University of Science and Technology, Wuhan,
      430074, China. }
\begin{abstract}

This article deals with the weak errors for averaging principle for
a stochastic wave equation in a bounded interval $[0,L]$, perturbed
by a oscillating term arising as the solution of a stochastic
reaction-diffusion equation evolving with respect to the fast time.
Under suitable conditions, it is proved that the rate of weak
convergence to the averaged effective dynamics is of order $1$ via
an asymptotic expansion approach.

\end{abstract}

\begin{keyword}
Stochastic wave equations, averaging principle, invariant measure
  weak convergence, asymptotic expansion.

MSC: primary 60H15, secondary 70K70
\end{keyword}

\end{frontmatter}



 \section{Introduction}
Let $D=[0, L]\subset \mathbb{R}$ be a bounded open interval.  In the
article, for fixed $T>0$, we consider the following class of
stochastic wave equation with fast oscillating perturbation,
\begin{equation}\label{wave-equation}
\begin{cases}
\frac{\partial^2}{\partial t^2} U_t^{\epsilon}(\xi)=\Delta
U_t^{\epsilon}(\xi)+F(U_t^{\epsilon}(\xi), Y_\frac{t}{\epsilon}
(\xi))
+\sigma_1\dot{W_t^1}(\xi),\; t\in [0, T],\xi\in D, \\
U_t^{\epsilon}(\xi)=0, (\xi, t)\in \partial D\times (0, T], \\
U_0^{\epsilon}(\xi)=x_1(\xi),  \frac{\partial
U_t^{\epsilon}(\xi)}{\partial t}\Big|_{t=0}={x}_2(\xi), \xi\in D,
\end{cases}
\end{equation}
where $\epsilon$ is   positive parameter, $Y_t$ is governed by the
stochastic reaction-diffusion equation:
\begin{equation}\label{para-equation}
\begin{cases}
\frac{\partial }{\partial t}Y_t (\xi)= \Delta Y_t (\xi)+
 g( Y_t (\xi))+ {\sigma_2} \dot{W_t^2}(\xi),\; t\in [0, T],\xi\in D,\\
Y_t (\xi)=0, (\xi, t)\in \partial D\times (0, T], \\
Y_0 (\xi)=y(\xi),
\end{cases}
\end{equation}
Assumptions on the smoothness of the drift $f$ and $g$ will be given
below. The stochastic perturbations are of additive type and
$W^1_t(\xi)$ and $W^2_t(\xi)$ are mutually independent
$L^2(D)-$valued Wiener processes on a complete stochastic basis
$(\Omega, \mathscr{F}, \mathscr{F}_t, \mathbb{P})$, which will be
specified later. The noise strength coefficients $\sigma_1$ and
$\sigma_1$ are positive constants and the parameter $\epsilon$ is
small, which describes the ratio of time scale between the process
$X^\epsilon_t(\xi)$ and $Y_{t/\epsilon}(\xi)$.  With this time scale
the variable $X_t^\epsilon(\xi)$ is referred as slow component and
$Y_{t/\epsilon} (\xi)$ as the fast component.

The equation \eqref{wave-equation} is an abstract model for a random
vibration of a elastic string with  a fast oscillating perturbation.
More generally, the nonlinear coupled wave-heat equations with fast
and slow time scales may describe a thermoelastic wave propagation
in a random medium \cite{Chow-1}, the interactions of fluid motion
with other forms of waves \cite{Leung,X Zhang}, wave phenomena which
are heat generating or temperature related \cite{Leung-0},
magneto-elasticity \cite{Rivera} and biological problems \cite{Choi,
Cardetti, S.H. Wu}.

Averaging principle plays an important role in the study of
asymptotic behavior for slow-fast dynamical systems. It was first
studied by Bogoliubov\cite{Bogoliubov} for deterministic
differential equations. The theory of averaging for stochastic
ordinary equations may be found in \cite{Khas}, the works of
Freidlin and Wentzell \cite{Freidlin-Wentzell1, Freidlin-Wentzell2},
Veretennikov \cite{Vere1, Vere2}, and  Kifer \cite{Kifer1, Kifer2,
Kifer3}. Further progress on averaging for stochastic dynamical
systems  with non-Gaussian noise in finite dimensional space was
studied in \cite{Xu, Xu1, Xu2,Xu3,Xu4}. Concerning the infinite
dimensional case, it is worth quoting the paper by Cerrai
\cite{Cerrai1,Cerrai2,Cerrai-Siam}, Br\'{e}hier \cite{Brehier}, Wang
\cite{wangwei},  Fu \cite{Fu-Liu} and Bao \cite{Bao}.

In our  previous article \cite{Fu-Liu-2}, the asymptotic limit
dynamics (as $\epsilon$ tends to $0$) of system
\eqref{wave-equation} was explored within averaging
 framework. Under suitable conditions, it can be  shown that a reduced stochastic
wave equation, without the  fast component, can be constructed  to
characterize the essential  dynamics of \eqref{wave-equation} in a
pathwise sense,   as it is done in \cite{Cerrai1, Cerrai2,
Cerrai-Siam} for stochastic partial equations of parabolic type and
for stochastic ordinary differential equations \cite{Givon1, LiuDi,
Wainrib}.

In the present paper, we are interested in  the rate of weak
convergence of the averaging dynamics to the true solution of slow
motion $U^\epsilon_t(\xi)$. Namely, we will determine the order,
with respect to timescale parameter $\epsilon$, of weak deviations
between original solution of slow equation and  the solution  of the
corresponding averaged equation. To our knowledge, up to now this
problem has been treated only in the case of deterministic reaction
diffusion equations  in dimension $d=1$  subjected with a random
perturbation evolving with respect to the fast time $t/\epsilon$ (to
this purpose we refer to the  paper by Br\'{e}hier \cite{Brehier}).
Once the noise is included in slow variable, the method in
\cite{Brehier} used to obtain the weak order $1-\varepsilon$ for
arbitrarily small  $\varepsilon>0$ will be  more complicated due to
the lack of time regularity for slow solution.

In the situation we are considering, an additive time-space white
noise is included in the slow motion and the main results show that
order $1$ for weak convergence can be derived, which can  be
compared with the order $1-\varepsilon$ in \cite{Brehier}. Under
dissipative assumption on Eq. \eqref{para-equation}, the
perturbation process $Y_t$ admits a unique invariant measure $\mu$
with mixing property. Then, by averaging the drift coefficient  of
the slow motion Eq. \eqref{wave-equation} with respect to the
invariant measure $\mu$, the effective equation with following form
can be established:
\begin{equation*}
\begin{cases}
 \frac{\partial^2 }{\partial t^2}\bar{U}_t(\xi)=\Delta
\bar{U}_t(\xi)+\bar{F}(\bar{U}_t(\xi))+\sigma_1\dot{W_t^1}(\xi), \\
 \bar{U}_t(\xi)=0, (\xi, t)\in \partial D\times (0, T],\\
 \bar{U}_0(\xi)=x_1(\xi), \frac{\partial \bar{U}_t(\xi)}{\partial
t}\Big|_{t=0}= {x}_2(\xi), \xi\in D,
\end{cases}
\end{equation*}
where for any $u,y\in H:=L^2(D),$
\begin{equation*}
\bar{F}(u):=\int_HF(u,y)\mu(dy), u\in H.
\end{equation*}
We prove that, under a smoothness assumption on drift coefficient in
the slow motion equation, an error estimate of the following form
\begin{eqnarray*}
|\mathbb{E}\phi( {U}^\epsilon_t)-\mathbb{E}\phi(\bar{U}_t)|\leq
C\epsilon
\end{eqnarray*}
for any function $\phi$ with derivatives bounded up to order $3$. In
order to prove the validity of above bound, we adopt asymptotic
expansion schemes in \cite{Brehier} to decompose
$\mathbb{E}\phi({U}^\epsilon_t)$ with respect to the scale parameter
$\epsilon$ in form of
\begin{equation*}
\mathbb{E}\phi({U}^\epsilon_t)=u_0+\epsilon u_1+r^\epsilon,
\end{equation*}
where the functions $u_0$ has to coincide with
$\mathbb{E}\phi(\bar{U}_t)$ by  uniqueness discuss, as it can be
shown that they are governed by the same Kolmogorov equation via
identification the powers of $\epsilon$. Due to solvability of the
Poisson equation associated with generator of perturbation process
$Y_t$, an explicit expression of $u_1$ can be constructed such that
its boundedness is based on a priori estimates for the $Y_t$ and
smooth dependence on initial data for averaging equation. The next
step consist in identifying $r^\epsilon$ as the solution of a
evolutionary equation and showing that $|r^\epsilon|\leq C\epsilon$.
The proof of bound for $r^\epsilon$ is based on estimates on
$\frac{du_1}{dt}$ and $\mathcal{L}_2u_1$, where $\mathcal{L}_2 $ is
the  Kolmogorov operator associated  with the slow motion equation.
We would like to stress that this procedure is quite involved, as it
concerns a system with noise in infinite dimensional space, and the
diffusion term leading to quantitative analysis on higher order
differentiability of $\mathbb{E}\phi(\bar{U}_t)$ with respect to the
initial datum. Let us also remark that asymptotic expansion of the
solutions of Kolmogorov equations was studied in \cite{Khas2, Khas3}
and \cite{Weinan}.

The rest of the paper is arranged as follows.  Section 2 is devoted
to the general notation and framework. The ergodicity of fast
process and the averaging dynamics of system \eqref{wave-equation}
is introduced in Section 3. Then the main results of this article,
which is derived via the asymptotic expansions and uniform error
estimates, is presented in Section 4. In the final section, we state
and prove technical lemmas applied in the preceeding section.

Throughout the paper, the letter $C$ below with or without
subscripts will denote  generic positive constants independent of
$\epsilon$, whose value may  change from one line to another.

\section{Preliminary }
To rewrite  the systems \eqref{wave-equation} and
\eqref{para-equation} as the abstract evolution equations, we
present some notations and  some well-known facts for later use.

For a fixed domain $D=[0, L]$, we use the abbreviation $H:=L^2(D)$
for the space of square integrable real-valued functions on $D$. The
scalar product and norm on $H$ are denoted by $(\cdot, \cdot)_H$ and
$\|\cdot\|$, respectively.

We recall the definition of the Wiener process in infinite space.
For more details, see \cite{Daprato}. Let $\{q_{i,k}(\xi)\}_{k\in
\mathbb{N}}$ be $H$-valued eigenvectors of a nonnegative, symmetric
operator $Q_i$ with corresponding eigenvalues $\{\lambda_{i,
k}\}_{k\in \mathbb{N}}$, for $i=1, 2$, such that
\[Q_iq_{i,k}(\xi)=\lambda_{i, k} q_{i, k}(\xi),\; \lambda_{i, k}>0, k\in \mathbb{N}.\]
For $i=1, 2$, let $W_t^{i}(\xi)$ be an $H$-valued $Q_i$-Wiener
process with operator $Q_i$ satisfying
\begin{eqnarray*}
TrQ_i=\sum\limits_{k=1}^{+\infty}\lambda_{i, k}< {+\infty}.
\end{eqnarray*}
Then
\[W^{i}_t(\xi)=\sum\limits_{k=1}^{+\infty}{\lambda^\frac{1}{2}_{i, k}}\beta_{i, k}(t)q_{i,k}(\xi),\;t\geq 0,\]
where $\{\beta_{i, k}(t)\}^{i=1, 2}_{k\in\mathbb{N}}$ are mutually
independent real-valued Brownian motions on a probability base
$(\Omega, \mathscr{F}, \mathscr{F}_t, \mathbb{P})$. For the
abbreviation, we will sometimes omit the spatial variable $\xi$ in
the sequel.

Let $\{ e_k(\xi)\}_{k\in\mathbb{N}}$ denote the complete
orthornormal system of eigenfunctions in $H$ such that, for $k =
1,2,\ldots$,
\begin{eqnarray*}\label{eigenfunction} -\Delta
e_k=\alpha_ke_k,\;\;e_k(0)=e_k(L)=0 ,
\end{eqnarray*}
with $0<\alpha_1\leq\alpha_2\leq\cdots\alpha_k\leq\cdots$.
{Here we would like to recall the fact that
$e_k(\xi)=\sin\frac{k\pi\xi}{L}$ and
$\alpha_k=-\frac{k^2\pi^2}{L^2}$ for $k = 1,2,\cdots$.}

Let $A$ be the realization in $H$ of the Laplace operator $\Delta$
with zero Dirichlet boundary condition, which generates a strong
continuous semigroups $\{E_t\}_{t\geq 0},$ defined by, for any $h\in
H$,
\begin{eqnarray*}
E_th=\sum\limits_{k=1}^{+\infty} e^{-\alpha_kt}e_k\Big(e_k,
h\Big)_H.
\end{eqnarray*}
It is straightforward to check that $\{E_t\}_{t\geq0}$ are
contractive semigroups on $H$.

For $s\in \mathbb{R}$, we introduce the space $H^s:=D((-A)^{s/2})$,
which equipped with inner product
\begin{eqnarray*}
\langle
g,h\rangle_s:=\Big((-A)^{\frac{s}{2}}g,(-A)^{\frac{s}{2}}h\Big)_H=\sum\limits_{k=1}^{+\infty}\alpha^s_i\Big(g,e_k\Big)\Big(h,e_k\Big)_H,\;g,h\in
H^s
\end{eqnarray*}
and the norm
\begin{eqnarray*}
\|\varphi\|_s=\left\{\sum\limits_{k=1}^{+\infty}\alpha_k^s\Big(\varphi,
e_k\Big)_H^2\right\}^\frac{1}{2}
\end{eqnarray*}
for $\varphi\in H^s.$ It is obvious that $H^0=H$ and
$H^\alpha\subset H^\beta$ for $\beta\leq \alpha.$ We note that in
the case  of $s>0$, $H^{-s}$ can be identified with the dual space
$(H^{s})^*$, i.e. the space of the  linear functional on $H^{s}$
which are continuous with respect to the topology induced by the
norm $\|\cdot\|_s$. We shall denote by $\mathcal {H}^\alpha$ the
product space $H^\alpha\times H^{\alpha-1}, \alpha\in \mathbb{R}$,
endowed with the scalar product
\begin{eqnarray*}
\Big(x, y\Big)_{\mathcal {H}^\alpha}=\langle
x_1,y_1\rangle_\alpha+\langle x_2,y_2\rangle_{\alpha-1},\;\quad
x=(x_1, x_2)^T, y=(y_1, y_2)^T,
\end{eqnarray*}
and the corresponding norm
\begin{eqnarray*}
\||x|\|_\alpha=\left\{\|x_1\|^2_\alpha+\|x_2\|^2_{\alpha-1}\right\}^{\frac{1}{2}},\quad
x=(x_1,x_2)^T.
\end{eqnarray*}
If $\alpha=0$ we abbreviate $H^0\times H^{-1}=\mathcal{H}$ and
$\||\cdot\||=\||\cdot\||_0$. To consider \eqref{wave-equation} as an
abstract evolution equation, we set
$V^\epsilon_t=\frac{d}{dt}U^\epsilon_t$ and let $
X^\epsilon_t=\left[
\begin{array}{ccc}
 U^\epsilon_t \\
 V^\epsilon_t \\
 \end{array}
\right] $ with $X_0^\epsilon :=x=\left[
\begin{array}{ccc}
 x_1 \\
 x_2 \\
 \end{array}
\right]. $ The systems \eqref{wave-equation} and
\eqref{para-equation} can be rewritten as an abstract form
\begin{equation}\label{Abstract-equa}
\begin{cases}
 dX^\epsilon_t=\mathcal {A} X^\epsilon_tdt+\textbf{F}(X^\epsilon_t,Y^\epsilon_t)dt+BdW^1_t,\\
dY^\epsilon_t=\frac{1}{\epsilon}{A} Y^\epsilon_tdt+\frac{1}{\epsilon}g(Y^\epsilon_t)dt+\frac{\sigma}{\sqrt{\epsilon}}dW^2_t, \\
X^\epsilon_0=x, Y^\epsilon_0=y,
\end{cases}
\end{equation}
where
\begin{equation*}
\mathcal {A}:=\left[
\begin{array}{ccc}
 0 & I \\
 A & 0
\end{array}
\right], \textbf{F}(x, y):=\left[
\begin{array}{ccc}
 0  \\
 F(\Pi_1\circ x,y)
\end{array}
\right], B:=\left[
\begin{array}{ccc}
 0  \\
 I
\end{array}
\right],
\end{equation*}
with
$$\mathcal {D}(\mathcal{A})=\left\{X=(x_1,x_2)^T\in \mathcal {H}:\mathcal{A}X=\left[
\begin{array}{ccc}
 x_2  \\
 Ax_1
\end{array}
\right]\in \mathcal {H}\right\}=\mathcal {H}^1,$$ here $A$ is
regarded as an operator from $H^1$ to $H^{-1}$, and $\Pi_1$ denotes
the canonical projection $\mathcal {H} \rightarrow H $. It is well
known that the operator $\mathcal{A}$ is the generator of a strongly
continuous semigroup $\{\mathcal {S}_t\}_{t\geq0}$ on $\mathcal{H} $
with the explicit form
\begin{eqnarray}\label{macal-S-t}
\mathcal {S}_t=e^{\mathcal{A}t}=\left[
\begin{array}{ccc}
 C(t) &  (-A)^{-\frac{1}{2}}S(t) \\
 - (-A)^{\frac{1}{2}}S(t) & C(t)
\end{array}
\right], \;t\geq 0,
\end{eqnarray}
where $C(t)=\cos({(-A)^\frac{1}{2}} t)$ and
$S(t)=\sin({(-A)^\frac{1}{2}} t)$ are so-called cosine and sine
operators  with the expression in
 term of the orthonormal eigenpairs $\{\alpha_1, e_i\}_{i\in
  \mathbb{N}}$ of $A$:

\begin{eqnarray*}
  &&C(t)h=\cos({(-A)}^\frac{1}{2} t)h=\sum\limits_{k=1}^{+\infty}
    {\cos\{\sqrt{\alpha_k}t\}} \Big(e_k,
  h\Big)_H\cdot e_k,\\
 && S(t)h=\sin({(-A)}^\frac{1}{2} t)h=\sum\limits_{k=1}^{+\infty}
    {\sin\{\sqrt{\alpha_k}t\}} \Big(e_k,
  h\Big)_H\cdot e_k.
\end{eqnarray*}
Moreover, it is easy to check that $\||\mathcal{S}_t
x\||\leq\||x\||$ for $t\geq 0, x\in \mathcal{H}.$ In order to ensure
existence and uniqueness of the perturbation process $Y_t$ we shall
assume throughout this paper that:\\
(\textbf{Hypothesis 1}) For the mapping $g:H \rightarrow H$, we
require that there exists a constant $L_g>0$ such that
\begin{eqnarray}
\|g(u_1)-g(u_2)\|\leq L_g(\|u_1-u_2\|), \;u, v\in H\label{g-condi}.
\end{eqnarray}
moreover, we assume that $L_g< \alpha_1.$

Concerning the coefficient $F$ we impose the following
conditions:\\
(\textbf{Hypothesis 2}) For the mapping $F:H\times H \rightarrow H$,
we assume that there exists a constant $L_F>0$ such that
\begin{eqnarray}
\|F(u_1,v_1)-F(u_2,v_2)\|\leq
L_F(\|u_1-u_2\|+\|v_1-v_2\|),\;u_1,u_2, v_1,v_2\in
H.\label{F-condi-1}
\end{eqnarray}
Also suppose that for any $u\in H$, the mapping $F(u,\cdot):
H\rightarrow H$ is of class $C^2$, with bounded derivatives.
Moreover, we require that there exists a constant $L$ such that for
any $u,v,w,y,y'\in H$ its directional derivatives are well-defined
and satisfy
\begin{eqnarray}
&&\|D_uF(u,y)\cdot w\|\leq L \|w\|,\label{F'}\\
&&\|D^2_{uu}F(u,y)\cdot (v,w)\|\leq L \|v\|\cdot\|w\|.\label{F''}\\
&&\|[D_uF(u,y)-D_uF(u,y')]\cdot w\|\leq L \|y-y'\|\cdot\|w\|\label{F'-lip}\\
&&\|D^2_{uu}[F(u,y)-F(u,y')]\cdot (v,w)\|\leq L
\|y-y'\|\cdot\|v\|\cdot\|w\|.\label{F''-lip}
\end{eqnarray}

\begin{remark}
A simple example of the dirft coefficient  $F$ is given by
\begin{equation*}
F(u,y)=F_1(u)+F_2(y),
\end{equation*}
here $F_1, F_2: H\rightarrow H$ are of class $C^2$ with   uniformly
bounded derivatives up to order 2.
\end{remark}

According to conditions \eqref{g-condi} and \eqref{F-condi-1},
system \eqref{Abstract-equa} admits a unique mild solution. Namely,
as discussed in \cite{Daprato}, for  any  $y\in H$ there exists a
unique adapted process $Y (y)\in L^2(\Omega,C([0, T];H)$ such that

\begin{eqnarray}
Y _t(y)=E_{t }y + \int_0^tE_{t-s }g( Y_s (y))ds+ {\sigma_2}
\int_0^tE_{t-s }dW^2_s,\label{Integral-Fast}
\end{eqnarray}
By arguing as in the proof of \cite{Daprato}, Theorem 7.2,  it is
possible to show that there exists a constant $C>0$ such that
\begin{eqnarray}
\mathbb{E}\|Y_t(y)\|^2\leq C(1+\|y\|^2),\;t>0, \label{Y-bound}
\end{eqnarray}
and in correspondence of such $Y_{t}(y)$, for any $\epsilon>0$ and
$x=(x_1, x_2)^T\in \mathcal{H}$ there exists a unique adapted
process $X^\epsilon(x,y)\in L^2(\Omega,C([0,T ];\mathcal{H}))$ such
that
\begin{eqnarray}
X^\epsilon_t(x,y)=\mathcal {S}_tx+\int_0^t\mathcal
{S}_{t-s}\textbf{F}(X_s^{\epsilon}(x,y), Y
_{s/\epsilon}(y))ds+\sigma_1\int_0^t\mathcal
{S}_{t-s}BdW^1_s.\label{Integral-Slow}
\end{eqnarray}
We point out   that if $x=(x_1,x_2)^T$ is taken in
$D(\mathcal{A})=\mathcal{H}^1$, then $X^\epsilon_t$ values in $
\mathcal{H}^1$ for $t>0$ (see \cite{Chow}) and satisfies
\begin{eqnarray}
\mathbb{E}\||X^\epsilon_t(x,y)\||^2_1\leq C(1+\|y\|^2+\|x\|^2_1)
\label{2.6}
\end{eqnarray}
for some constant $C>0$. Moreover, we present an estimate for the
$\mathcal{H}-$norm of $\mathcal{A}X_t^\epsilon$, which is uniform
with respect to $\epsilon>0$.
\begin{proposition}
Let ${X}^\epsilon_t(x, y)=(U_t^\epsilon(x, y), V_t^\epsilon(x,
y))^T$  be the solution to the problem \eqref{Integral-Slow}, where
the initial value  satisfies ${X}^\epsilon_0=x=(x_1, x_2)^T\in
\mathcal{H}^1$, and the function  $F$ satisfies \eqref{F-condi-1}.
Then it holds that
\begin{eqnarray}
\mathbb{E}\||\mathcal{A}X^\epsilon_t(x, y)\||^2\leq C
(1+\|y\|^2+\||x\||_1^2).\label{AX-bound}
\end{eqnarray}
\begin{proof}
We have
\begin{equation*}
\mathcal{A}X^\epsilon_t(x,y)=\left(
\begin{array}{ccc}
 0 & I   \\
 A & 0
 \end{array}
\right) \left(
\begin{array}{ccc}
 U_t^\epsilon(x,y)  \\
 V_t^\epsilon(x,y)
 \end{array}
\right)=
 \left(
\begin{array}{ccc}
 V_t^\epsilon(x,y)  \\
 A(U_t^\epsilon(x,y))
 \end{array}
\right),
\end{equation*}
so that
\begin{eqnarray}
\||\mathcal{A}X^\epsilon_t(x,
y)\||^2&=&\|V_t^\epsilon(x,y)\|^2+\|A(U_t^\epsilon(x,y))\|^2_{-1}\nonumber\\
&=&\|V_t^\epsilon(x,y)\|^2+\|A^{\frac{1}{2}}(U_t^\epsilon(x,y))\|^2
. \label{2.7}
\end{eqnarray}
Let us start to  estimate  the norm of
$A^{\frac{1}{2}}(U_t^\epsilon(x,y))$ and consider the expression
\begin{eqnarray*}
A^{\frac{1}{2}}U_t^\epsilon(x,y)&=&A^{\frac{1}{2}}C(t)x_1-S(t)x_2-\int_0^tS(t-s)F(U_s^\epsilon(x,y),Y _{s/\epsilon}(y))ds\\
&+&\sigma_1\int_0^tS(t-s)dW^1_s.
\end{eqnarray*}
Directly, we have
\begin{eqnarray}
\|A^{\frac{1}{2}}C(t)x_1\|^2+\|S(t)x_2\|^2 \leq
C(\|x_1\|^2_1+\|x_2\|^2). \label{2.8}
\end{eqnarray}
In view of the assumptions on $F$ given in \eqref{F-condi-1}, we
obtain
\begin{eqnarray}
&&\mathbb{E}\|\int_0^tS(t-s)F(U_s^\epsilon(x,y),Y_{s/\epsilon}(y))ds\|^2\nonumber\\
&&\leq C_1+C_2\int_0^t\mathbb{E}
[\|U_s^\epsilon(x,y)\|^2+\|Y_{s/\epsilon}(y)\|^2]ds\nonumber\\
&&\leq C_1+C_2\int_0^t\mathbb{E}
[\|A^{\frac{1}{2}}U_s^\epsilon(x,y)\|^2+\|Y_{s/\epsilon}(y))\|^2]ds,\nonumber
\end{eqnarray}
and then, thanks to \eqref{Y-bound}, we have
\begin{eqnarray}
&&\mathbb{E}\|\int_0^tS(t-s)F(U_s^\epsilon(x,y),Y_{s/\epsilon}(y))ds\|\nonumber\\
&&\leq C_1(1+\|y\|^2)+C_2\int_0^t
\mathbb{E}\|A^{\frac{1}{2}}U_s^\epsilon(x,y)\|^2ds. \label{2.9}
\end{eqnarray}
Notice that in view of Ito's isometry,  we have
\begin{eqnarray*}
\mathbb{E}\|\int_0^tS(t-s)dW^1_s\|^2\leq C_3
\end{eqnarray*}
and then, combining this estimate with \eqref{2.8} and \eqref{2.9},
we have
\begin{eqnarray}
\mathbb{E}\|A^{\frac{1}{2}}U_t^\epsilon(x,y)\|^2&\leq&
C_1(1+\|y\|^2+\|x_1\|_1^2+\|x_2\|^2)\nonumber\\
&+&C_2\int_0^t\mathbb{E}
\|A^{\frac{1}{2}}U_s^\epsilon(x,y)\|^2ds.\nonumber
\end{eqnarray}
From the Gronwall's lemma, this gives
\begin{eqnarray*}
\mathbb{E}\|A^{\frac{1}{2}}U_t^\epsilon(x,y)\|^2&\leq&
C_1(1+\|y\|^2+\|x_1\|_1^2+\|x_2\|^2).
\end{eqnarray*}
In an analogous way,  we can prove that
\begin{eqnarray*}
\mathbb{E}\|V_t^\epsilon(x,y)\|^2\leq
C_1(1+\|y\|^2+\|x_1\|_1^2+\|x_2\|^2).
\end{eqnarray*}
Thanks to \eqref{2.7}, the two inequalities above yield
\eqref{AX-bound}.
\end{proof}
\end{proposition}


If $\mathcal {X}$ is a Hilbert space equipped with inner product
$(\cdot,\cdot)_\mathcal {X}$, we denote by $C^1(\mathcal
{X},\mathbb{R})$ the space of all real function $\phi:\mathcal
{X}\rightarrow \mathbb{R}$ with continuous Fr\'{e}chet derivative
and  use the notation $D\phi(x)$ for the differential of a $C^1$
function on $\mathcal {X}$ at the point $x$. Thanks to Riesz
representation theorem, we may get the identity for $x,h\in \mathcal
{X}$:
$$D\phi(x)\cdot h=(D\phi(x), h)_\mathcal {X}.$$
We define $C_b^2(\mathcal {X}, \mathbb{R})$ to be the space of all
real-valued, twice Fr\'{e}chet differential function on $\mathcal
{X}$, whose first and second derivatives are continuous and bounded.
For $\phi\in C_b^2(\mathcal {X}, \mathbb{R})$, we will identify
$D^2\phi(x)$ with a bilinear operator from $\mathcal {X}\times
\mathcal {X}$ to $\mathbb{R}$ such that
$$D^2\phi(x)\cdot (h,k)=(D^2\phi(x)h,k)_\mathcal {X},\;\; x,h,k\in \mathcal {X}.$$
On some occasions, we also use the notation $\phi',\phi''$ instead
of $D\phi$ or $D^2\phi.$

\section{Ergodicity of $Y_t$ and averaging dynamics}
Now, we consider the transition semigroup $P_t$ associated with
perturbation process $Y_t(y)$ defined by equation
\eqref{Integral-Fast}, by setting for any $\psi \in \mathcal
{B}_b(H)$ the space of bounded functions on $H$,
\begin{equation*}
P_t\psi(y)=\mathbb{E}\psi(Y_t(y)).
\end{equation*}
By arguing as \cite{Fu-Liu}, we can show that
\begin{equation}\label{Fast-motion-energy-bound}
\mathbb{E}\|Y_t(y)\|^2\leq
C\left(e^{-(\alpha_1-L_g)t}\|y\|^2+1\right),\; t>0
\end{equation}
for some constant $C>0$. This implies that there exists an invariant
measure $\mu $ for the Markov semigroup $P _t$ associated with
system \eqref{Integral-Fast} in $H$ such that
$$ \int_HP_t \psi d\mu =\int_H\psi d\mu , \quad
t\geq 0
$$
for any  $\psi \in \mathcal {B}_b(H)$ (for a proof, see, e.g.,
\cite{Cerrai2}, Section 2.1). Then by repeating the standard
argument as in the proof of Proposition 4.2 in \cite{Cerrai-Siam},
the invariant measure has finite $2-$moments:
\begin{equation}\label{mu-Momenent-bound}
\int_H\|y\|^2\mu(dy)\leq C.
\end{equation}
Let $Y_t(y')$ be the solution of  \eqref{Integral-Fast} with initial
value $Y_0=y'$, it can be check  that for any $t\geq0$,
\begin{eqnarray}\label{initial-diff}
\mathbb{E}\|Y_t(y)-Y_t( y')\|^2\leq\|y-y'\|^2e^{-\eta t}
\end{eqnarray}
with $\eta=(\alpha_1-L_g)>0,$ which implies that $\mu $ is the
unique invariant measure for $P _t$. Then, by averaging the
coefficient $F$ with respect to the invariant measure $\mu$, we can
define a $H$-valued mapping
\begin{equation*}
\bar{F}(u):=\int_HF(u,y)\mu(dy), u\in H, \label{aver-F}
\end{equation*}
and then, due to condition \eqref{F-condi-1}, it is easily to check
that
\begin{eqnarray}
\|\bar{F}(u_1)-\bar{F}(u_2)\|\leq L\|u_1-u_2\|, \; u_1, u_2\in H.
\label{barF-lip}
\end{eqnarray}

Now we  will consider the  effective dynamics system
\begin{eqnarray}
 \label{Averaging-equation}
 \begin{cases}
 \frac{\partial^2}{\partial
t^2}\bar{U}_t(\xi)=\Delta \bar{U}_t(\xi)+ \bar{F}(\bar{U}_t(\xi))+\sigma_1\dot{W}_t^{1},\;(\xi,t)\in D\times[0, T],\\
 \bar{U}_t(\xi)=0, (\xi, t)\in \partial
D\times [0, {+\infty}), \\
 \bar{U}_0(\xi)=x_1(\xi), \frac{\partial}{\partial
t} \bar{U}_t(\xi)|_{t=0}=x_2(\xi), \;\xi\in D.
\end{cases}
\end{eqnarray}
Following the same notation as in Section 2,  the problem
\eqref{Averaging-equation} can be transferred to a stochastic
evolution equation:
\begin{equation}\label{Abstract-aver-equa}
\begin{cases}
 d\bar{X}_t=\mathcal {A} \bar{X}_tdt+ \bar{\bf{F}}(\bar{X}_t)dt+BdW^1_t,\\
X_0=x,
\end{cases}
\end{equation}
where
 $
\bar{X}_t=\left[
\begin{array}{ccc}
 \bar{U}_t \\
 \bar{V}_t \\
 \end{array}
\right]$ with $\bar{V}_t=\frac{d}{dt}\bar{U}_t$ and
$\bar{\bf{F}}(x):=\left[
\begin{array}{ccc}
 0  \\
 \bar{F}(\Pi_1\circ x)
\end{array}
\right]=\left[
\begin{array}{ccc}
 0  \\
 \bar{F}(u)
\end{array}
\right].$ The mild form for system
 \eqref{Abstract-aver-equa} is given by
\begin{eqnarray*}
\bar{X}_t(x)=\mathcal {S}_tx+\int_0^t\mathcal
{S}_{t-s}\bar{\bf{F}}(\bar{X}_s(x))ds+\sigma_1\int_0^t\mathcal
{S}_{t-s}BdW^1_s.\label{Aver-Integral}
\end{eqnarray*}
By arguing as before, for any $x=(x_1, x_2)^T\in \mathcal{H}$ the
above integral equation admits a unique mild solution in $
L^2(\Omega,C([0,T ];\mathcal{H}))$ such that
\begin{eqnarray}
\mathbb{E}\||\bar{X}_t(x)\||\leq C(1+\||x\||),\;t\in [0,
T].\label{bar-X-bound}
\end{eqnarray}

\section{Asymptotic expansions}

Let $\phi\in C_b^2( {H} , \mathbb{R})$ and define a function
$u^\epsilon: [0, T]\times \mathcal {H} \times H\rightarrow
\mathbb{R}$ by
$$u^\epsilon(t, x,y)=\mathbb{E}\phi(U_t^\epsilon(x,y)).$$
Let $\Pi_1$ be the canonical projection $\mathcal {H} \rightarrow H
$. Define the function $\Phi: \mathcal{H} \rightarrow \mathbb{R}$ by
$\Phi(x):=\phi(\Pi_1  x)=\phi(x_1)$ for $x=(x_1,
x_2)^T\in\mathcal{H} $. Clearly, we have
\begin{eqnarray*}
u^\epsilon(t, x,y)=\mathbb{E}\Phi(X_t^\epsilon(x,y)).
\end{eqnarray*}
 We now introduce  two differential operators associated with the
   systems \eqref{Integral-Fast} and \eqref{Integral-Slow}, respectively:
\begin{eqnarray*}
\mathcal {L}_1\varphi(y)&=&\Big(Ay+g(y),
D_y\varphi(y)\Big)_H\\
&&+\frac{1}{2}\sigma_2^2Tr(D^2_{yy}\varphi(y)Q_2(Q_2)^*),\;
\varphi(y)\in C_b^2(H,\mathbb{R}),
\end{eqnarray*}
\begin{eqnarray*}
\mathcal {L}_2\Psi(x)&=&\Big(\mathcal {A}x+\textbf{F}(x,y),
D_x\Psi(x)\Big)_{\mathcal {H}
}\\
&&+\frac{1}{2}\sigma_1^2Tr(D^2_{xx}\Psi(x)BQ_1(BQ_1)^*),\;
\Psi(x)\in C_b^2(\mathcal {H},\mathbb{R} ).
\end{eqnarray*}
It is known that $u^\epsilon$ is a solution to the forward
Kolmogorov equation:
\begin{eqnarray}\label{Kolm}
\begin{cases}
\frac{d}{dt}u^\epsilon(t, x, y)=\mathcal {L}^\epsilon u^\epsilon(t, x, y),\\
u^\epsilon(0, x,y)=\Phi(x),
\end{cases}
\end{eqnarray}
where $\mathcal {L}^\epsilon=\frac{1}{\epsilon}\mathcal
{L}_1+\mathcal {L}_2.$

 Also recall the Kolmogorov operator for the averaging system is
defined as
\begin{eqnarray*}
\bar{\mathcal {L}}\Psi(x)&=&\Big(\mathcal {A}x+\bar{\textbf{F}}(x),
D_x\Psi(x)\Big)_{\mathcal {H}
}\\
&&+\frac{1}{2}\sigma_1^2Tr(D^2_{xx}\Psi(x)BQ_1(BQ_1)^*),\;
\Psi(x)\in C_b^2(\mathcal {H}, \mathbb{R} ).
\end{eqnarray*}
If we set $$\bar{u}(t,
x)=\mathbb{E}\phi(\bar{U}_t(x))=\mathbb{E}\Phi(\bar{X}_t(x)),$$ we
have
\begin{eqnarray}\label{Kolm-Aver}
\begin{cases}
\frac{d}{dt}\bar{u}(t, x)=\bar{\mathcal {L}} \bar{u}(t, x),\\
\bar{u}(0, x)=\Phi(x).
\end{cases}
\end{eqnarray}
Then the weak difference at time $T$ is equal to
\begin{eqnarray*}
\mathbb{E}\phi(\bar{U}_T)-\mathbb{E}\phi({U}^\epsilon_T)=u^\epsilon(T,
x,y)-\bar{u}(T,x).
\end{eqnarray*}
Henceforth, when there is no confusion, we often omit the temporal
variable $t$ and spatial variables $x$ and $y$. For example, for
$u^\epsilon(t, x, y)$, we often write it as $u^\epsilon$. Our aim is
to seek matched asymptotic expansions for the $u^\epsilon(T, x,y)$
of the form

\begin{eqnarray}\label{asymp-expan}
u^\epsilon=u_0+\epsilon u_1+r^\epsilon,
\end{eqnarray}
where  $u_0$ and $u_1$ are smooth functions which will be
constructed below , and $r^\epsilon$ is the remainder term. With the
above assumptions and notation we have the following   result, which
is a direct consequence of Lemma \ref{u-0}, Lemma
\ref{u-1-abso-lemma} and Lemma \ref{4.5}.
\begin{theorem}
Assume that $x\in \mathcal{H}^1, y\in D(A).$ Then, under Hypotheses
1
 and 2, for any any $T>0$ and $\phi\in C_b^3(H)$, there exist a
 constant $C_{T,\phi,x,y}$ such that
 \begin{eqnarray*}
 \left|\mathbb{E}\phi(U^\epsilon_T(x,y))-\mathbb{E}\phi(\bar{U}_T(x))\right|\leq
 C_{T,\phi,x,y}\epsilon.
 \end{eqnarray*}
\end{theorem}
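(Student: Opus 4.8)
The plan is to rigorously justify the matched asymptotic expansion $u^\epsilon=u_0+\epsilon u_1+r^\epsilon$ from \eqref{asymp-expan} by substituting it into the forward Kolmogorov equation \eqref{Kolm} and collecting terms according to powers of $\epsilon$. Writing $\mathcal{L}^\epsilon=\frac{1}{\epsilon}\mathcal{L}_1+\mathcal{L}_2$ and inserting the ansatz into $\frac{d}{dt}u^\epsilon=\mathcal{L}^\epsilon u^\epsilon$, I would equate the coefficients of $\epsilon^{-1}$ and $\epsilon^0$ and then isolate the remainder. This yields $\mathcal{L}_1u_0=0$ at order $\epsilon^{-1}$; the relation $\frac{d}{dt}u_0=\mathcal{L}_1u_1+\mathcal{L}_2u_0$ at order $\epsilon^0$; and, after cancelling the lower-order contributions and using $u_0(0,x)=\Phi(x)$, the equation
\begin{equation*}
\frac{d}{dt}r^\epsilon=\mathcal{L}^\epsilon r^\epsilon+\epsilon\Big(\mathcal{L}_2u_1-\tfrac{d}{dt}u_1\Big),\qquad r^\epsilon(0,x,y)=-\epsilon u_1(0,x,y).
\end{equation*}

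The first two relations identify $u_0$ and construct $u_1$, which I would record as Lemma \ref{u-0} and Lemma \ref{u-1-abso-lemma}. Since $\mathcal{L}_1$ generates the ergodic fast process $Y_t$, whose unique invariant measure is $\mu$ by \eqref{initial-diff}, the constraint $\mathcal{L}_1u_0=0$ forces $u_0=u_0(t,x)$ to be independent of $y$. Integrating the order-$\epsilon^0$ relation against $\mu(dy)$ then annihilates the $\mathcal{L}_1u_1$ term and replaces $\textbf{F}(x,y)$ by its average $\bar{\textbf{F}}(x)$, so that $u_0$ solves $\frac{d}{dt}u_0=\bar{\mathcal{L}}u_0$ with $u_0(0,\cdot)=\Phi$; by uniqueness for \eqref{Kolm-Aver} this gives $u_0=\bar{u}$. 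Subtracting this averaged equation leaves the Poisson equation $\mathcal{L}_1u_1=(\bar{\textbf{F}}(x)-\textbf{F}(x,y),D_xu_0)_{\mathcal{H}}$, whose right-hand side is centered with respect to $\mu$, and I would solve it explicitly by
\begin{equation*}
u_1(t,x,y)=\int_0^{+\infty}\Big(\mathbb{E}\,\textbf{F}(x,Y_s(y))-\bar{\textbf{F}}(x),\,D_xu_0(t,x)\Big)_{\mathcal{H}}\,ds,
\end{equation*}
the integral converging because the exponential mixing \eqref{initial-diff} together with \eqref{mu-Momenent-bound} yields $\|\mathbb{E}\,\textbf{F}(x,Y_s(y))-\bar{\textbf{F}}(x)\|\leq Ce^{-\eta s/2}(1+\|y\|)$; combined with \eqref{bar-X-bound} this gives the boundedness of $u_1$.

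It then remains to prove $|r^\epsilon(T,x,y)|\leq C\epsilon$, which I would record as Lemma \ref{4.5}. Representing $r^\epsilon$ through the Duhamel--Feynman--Kac formula associated with $\mathcal{L}^\epsilon$, that is, along the coupled trajectories $(X_s^\epsilon,Y_{s/\epsilon})$, the estimate reduces to showing that the initial datum $u_1(0,\cdot)$ and the source $\mathcal{L}_2u_1-\frac{d}{dt}u_1$ are bounded in expectation uniformly in $\epsilon$; the explicit $\epsilon$ prefactors then deliver the claimed first order. The time derivative is controlled by differentiating the representation of $u_1$ and using $\frac{d}{dt}D_xu_0=D_x(\bar{\mathcal{L}}u_0)$.

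The main obstacle, and the technical heart of Lemma \ref{4.5}, is the estimate on $\mathcal{L}_2u_1$. Because $u_1$ carries one factor of $D_xu_0$, applying the second-order operator $\mathcal{L}_2$ brings in derivatives of $u_0=\bar{u}$ up to third order in $x$, which is exactly why $\phi\in C_b^3(H)$ is needed; establishing these uniform-in-time third-order bounds amounts to a quantitative smooth-dependence-on-initial-data analysis for the averaged wave dynamics \eqref{Abstract-aver-equa}, relying on the Lipschitz regularity \eqref{barF-lip} and the contractivity $\||\mathcal{S}_tx\||\leq\||x\||$. The second difficulty is that $\mathcal{L}_2$ contains the unbounded drift $\mathcal{A}x$, so the transport term $(\mathcal{A}x,D_xu_1)_{\mathcal{H}}$ is only controllable in expectation through the a priori bound $\mathbb{E}\||\mathcal{A}X_t^\epsilon(x,y)\||^2\leq C(1+\|y\|^2+\||x\||_1^2)$ of the Proposition together with \eqref{Y-bound}; this is precisely what forces the hypotheses $x\in\mathcal{H}^1$ and $y\in D(A)$. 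Once both estimates are in place, assembling Lemmas \ref{u-0}, \ref{u-1-abso-lemma} and \ref{4.5} gives $|u^\epsilon(T,x,y)-\bar{u}(T,x)|=|\epsilon u_1+r^\epsilon|\leq C_{T,\phi,x,y}\,\epsilon$, which is the assertion.
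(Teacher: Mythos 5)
Your proposal is correct and follows essentially the same route as the paper's own proof: the identical expansion $u^\epsilon=u_0+\epsilon u_1+r^\epsilon$ matched against powers of $\epsilon$ in \eqref{Kolm}, the identification $u_0=\bar{u}$ by integrating the order-one relation against $\mu$, the explicit Poisson-equation solution for $u_1$ converging via the exponential mixing \eqref{initial-diff}, and the variation-of-constants representation of $r^\epsilon$ along $(X^\epsilon_s,Y_{s/\epsilon})$ bounded through estimates on $\mathcal{L}_2u_1$ and $\partial_t u_1$ together with the a priori bound \eqref{AX-bound}, exactly as in Lemmas \ref{u-0}, \ref{u-1-abso-lemma}, \ref{4.3}, \ref{4.4} and \ref{4.5}. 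One minor bookkeeping slip: the boundedness of $u_1$ rests on the derivative bound $\||D_x\bar{u}(t,x)\||\leq C_T$ of Lemma \ref{ux} combined with \eqref{Averaging-Expectation}, not on \eqref{bar-X-bound}, but this does not affect the argument.
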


\subsection{\textbf{The leading term}}
Let us first   determine the leading terms. Now, substituting
\eqref{asymp-expan} into \eqref{Kolm} yields
\begin{eqnarray*}
\frac{du_0}{dt}+\epsilon\frac{du_1}{dt}+\frac{dr^\epsilon}{dt}&=&
\frac{1}{\epsilon}\mathcal {L}_1u_0+\mathcal {\mathcal
{L}}_1u_1+\frac{1}{\epsilon}\mathcal {L}_1r^\epsilon\\
&&+\mathcal {L}_2u_0+\epsilon \mathcal {L}_2u_1+\mathcal
{L}_2r^\epsilon.
\end{eqnarray*}
By comparing coefficients of  powers of $\epsilon$, we obtain
\begin{eqnarray}
&&\mathcal {L}_1u_0=0, \label{u-o-equ-1} \\
&&\frac{du_0}{dt}=\mathcal {L}_1u_1+\mathcal
{L}_2u_0.\label{u-0-equ-2}
\end{eqnarray}
It follows from \eqref{u-o-equ-1} that $u_0$ is independent of $y$,
which means $$u_0(t,x, y)=u_0(t,x).$$ We also impose the initial
condition $u_0(0,x)=\Phi(x).$ Since $\mu$ is the invariant measure
of a Markov process with generator $\mathcal {L}_1$, we have
\begin{eqnarray*}
\int_H\mathcal {L}_1u_1(t,x,y)\mu(dy)=0,
\end{eqnarray*}
which, by invoking \eqref{u-0-equ-2}, implies
\begin{eqnarray*}
\frac{du_0}{dt}(t,x)&=&\int_H\frac{du_0}{dt}(t,x)\mu(dy)\\
&=&\int_H\mathcal {L}_2u_0(t,x)\mu(dy)\\
&=&\left(\mathcal {A}u_0(t,x)+\int_H\textbf{F}(x,y)\mu(dy),
D_xu_0(t,x)\right)_{\mathcal
{H}}\nonumber\\
&&+\frac{1}{2}\sigma_1^2Tr(D^2_{xx}u_0(t,x)BQ_1(BQ_1)^*)\\
&=&\bar{\mathcal {L}}u_0(t,x),
\end{eqnarray*}
so that $u_0$ and $\bar{u}$ satisfies the same evolution equation.
By using   a uniqueness argument, such $u_0$ has to coincide with
the solution $\bar{u}$  and we have the following lemma:
\begin{lemma}\label{u-0}
Assume Hypotheses 1 and 2. Then for any $x\in D(\mathcal{A})$, $y\in
D(A)$ and $T>0$, we have $u_0(T,x,y)=\bar{u}(T,x)$.
\end{lemma}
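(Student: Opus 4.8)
The plan is to reduce the statement to a uniqueness assertion for the averaged Kolmogorov Cauchy problem \eqref{Kolm-Aver}. The computation preceding the lemma has already carried out the substantive algebra: by \eqref{u-o-equ-1} the leading term is independent of $y$, so $u_0(t,x,y)=u_0(t,x)$, and combining \eqref{u-0-equ-2} with the invariance of $\mu$ under the semigroup generated by $\mathcal{L}_1$ shows that $u_0(t,x)$ solves $\frac{d}{dt}u_0=\bar{\mathcal{L}}u_0$ together with the imposed initial condition $u_0(0,x)=\Phi(x)$. On the other hand, $\bar{u}(t,x)=\mathbb{E}\Phi(\bar{X}_t(x))$ solves the identical problem by \eqref{Kolm-Aver}. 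Hence it suffices to prove that this linear Cauchy problem has at most one solution in the regularity class to which both $u_0$ and $\bar{u}$ belong, and then $u_0(T,x)=\bar{u}(T,x)$ follows at once.

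The uniqueness I would establish through the probabilistic (Dynkin) representation. Fix $T>0$ and let $w(t,x)$ be any sufficiently smooth solution of $\partial_t w=\bar{\mathcal{L}}w$ with $w(0,\cdot)=\Phi$. The idea is to apply It\^o's formula to the process $s\mapsto w(T-s,\bar{X}_s(x))$ on $[0,T]$, where $\bar{X}_s(x)$ is the mild solution of the averaged equation \eqref{Abstract-aver-equa}. Since $\bar{\mathcal{L}}$ is exactly the generator associated with \eqref{Abstract-aver-equa}, the deterministic part of the It\^o differential equals $\left(-\partial_t w+\bar{\mathcal{L}}w\right)(T-s,\bar{X}_s)\,ds$, which vanishes identically by the equation for $w$; only the stochastic integral against $dW^1_s$ survives. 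Using the boundedness of $D_x w$ and the It\^o isometry, this term is a genuine martingale with zero expectation, so taking expectations and comparing the endpoints $s=0$ and $s=T$ yields $w(T,x)=\mathbb{E}[w(0,\bar{X}_T(x))]=\mathbb{E}[\Phi(\bar{X}_T(x))]=\bar{u}(T,x)$. Applying this to $w=u_0$ gives the claim; note that only the hypothesis $x\in D(\mathcal{A})$ is genuinely used here, the condition $y\in D(A)$ being inherited from the standing assumptions.

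The main obstacle is the rigorous justification of It\^o's formula, since $\mathcal{A}$ is unbounded and $\bar{X}_s(x)$ is only a mild solution, so the term $(\mathcal{A}\bar{X}_s,D_x w)_{\mathcal{H}}$ is not a priori meaningful. This is precisely why the assumption $x\in D(\mathcal{A})=\mathcal{H}^1$ is imposed: under it the averaged analogue of \eqref{AX-bound}, proved exactly as in the Proposition (now using the Lipschitz bound \eqref{barF-lip} in place of \eqref{F-condi-1}, together with \eqref{bar-X-bound}), guarantees $\mathbb{E}\||\mathcal{A}\bar{X}_s\||^2<\infty$ uniformly on $[0,T]$, so the unbounded term is integrable. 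I would then run the standard approximation: replace $\mathcal{A}$ by its Yosida approximations $\mathcal{A}_n=n\mathcal{A}(nI-\mathcal{A})^{-1}$ (equivalently, approximate the mild solution by strong solutions), apply the classical It\^o formula to the regularized process, and pass to the limit using the regularity of $w$ and the uniform estimates. Both the cancellation of the drift and the martingale property are stable under this limit, which completes the uniqueness argument and hence the identification $u_0=\bar{u}$.
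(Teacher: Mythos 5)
Your proposal is correct and follows essentially the same route as the paper: both arguments identify, via the matching of powers of $\epsilon$ and the invariance of $\mu$, that $u_0$ is independent of $y$ and solves the averaged Kolmogorov equation \eqref{Kolm-Aver} with initial datum $\Phi$, and then conclude $u_0=\bar{u}$ by uniqueness for that Cauchy problem. The only difference is that the paper merely invokes ``a uniqueness argument'' without detail, whereas you supply one explicitly (the Dynkin representation via It\^{o}'s formula applied to $s\mapsto w(T-s,\bar{X}_s(x))$, justified by Yosida approximation and the $\mathcal{H}^1$ bound on $\mathcal{A}\bar{X}_s$), which is a standard and sound way to close that step.
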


\subsection{\textbf{Construction of} $ {u_1}$}
Let us proceed to carry out the construction of $u_1$. Thanks to
Lemma \ref{u-0} and \eqref{Kolm-Aver}, the equation
\eqref{u-0-equ-2} can be rewritten
\begin{eqnarray*}
\bar{\mathcal {L}}\bar{u}=\mathcal{L}_1u_1+\mathcal{L}_2\bar{u},
\end{eqnarray*}
and hence we get an elliptic equation for $u_1$ with form
\begin{eqnarray*}
\mathcal {L}_1u_1(t,x,y)=\Big(\bar{\bf{F}}(x)- \textbf{F}(x,y),
D_x\bar{u}(t,x)\Big)_{\mathcal {H} }:=-\rho(t,x,y),
\end{eqnarray*}
where $\rho$ is of class $C^2$ with respect to $y$, with uniformly
bounded derivative. Moreover, it satisfies for any $t\geq 0$ and
$x\in \mathcal{H}^1$,
\begin{eqnarray*}
\int_H\rho(t,x,y)\mu(dy)=0.
\end{eqnarray*}
For any $y\in D(A)$ and $s>0$ we have
\begin{eqnarray*}
\frac{d}{ds}P_s\rho(t,x,y)&=&\Big(Ay+g(x,y),D_y(P_s\rho(t,x,y))\Big)_H\nonumber\\
&+&\frac{1}{2}\sigma_2^2Tr[D^2_{yy}(P_s\rho(t,x,y))Q_2Q_2^*],
\end{eqnarray*}
here $$P_s\rho(t,x,y)=\mathbb{E}\rho(t, x,Y_s(y))$$ satisfying
\begin{equation}
\lim\limits_{s\rightarrow{+\infty}}\mathbb{E}\rho(t,
x,Y_s(y))=\int_H\rho(t,x,z)\mu(dz)=0.\label{rho-limits}
\end{equation}   Indeed, by
the invariant property of $\mu$ and Lemma \ref{ux},
\begin{eqnarray*}
&&\left|\mathbb{E}\rho(t,
x,Y_s(y))-\int_H\rho(t,x,z)\mu(dz)\right|\nonumber\\
&&=\left|\int_H\mathbb{E}[\rho(t, x,Y_s(y))- \rho(t,
x,Y_s(z))\mu(dz)]\right|\nonumber\\
&&\leq\int_H\left|\mathbb{E}\Big(\textbf{F}(x,Y_s(z)-
\textbf{F}(x,Y_s(y), D_x\bar{u}(t,x)\Big)_{\mathcal {H}
}\right|\mu(dz)\nonumber\\
&&\leq C\int_H \mathbb{E} \|Y_s(z)- Y_s(y)\|
\mu(dz).\nonumber\\
\end{eqnarray*}
This, in view of \eqref{initial-diff} and \eqref{mu-Momenent-bound},
yields
\begin{eqnarray*}
&&\left|\mathbb{E}\rho(t,
x,Y_s(y))-\int_H\rho(t,x,z)\mu(dz)\right|\nonumber\\
&&\leq Ce^{-\frac{\eta}{2}s},
\end{eqnarray*}
which implies the equality \eqref{rho-limits}. Therefore, we get
\begin{eqnarray}
&&\Big(Ay+g(x,y),D_y\int_0^{{+\infty}}P_s\rho(t,x,y)
ds\Big)_H\nonumber\\
&&+\frac{1}{2}\sigma_2^2Tr[D^2_{yy}\int_0^{{+\infty}}(P_s\rho(t,x,y))Q_2Q_2^*]ds\nonumber\\
&&=\int_0^{{+\infty}}\frac{d}{ds}P_s\rho(t,x,y)ds\nonumber\\
&&=\lim\limits_{s\rightarrow{+\infty}}\mathbb{E}\rho(t,
x,Y_s(y))-\rho(t,x,y)\nonumber\\
&&=\int_H\rho(t,x,y)\mu(dy)-\rho(t,x,y)\nonumber\\
&&=-\rho(t,x,y),\nonumber
\end{eqnarray}
which means $\mathcal{L}_1(\int_0^{{+\infty}}P_s\rho(t,x,y)
ds)=-\rho(t,x,y).$ Therefore, we can set
\begin{eqnarray}\label{u-1}
u_1(t,x,y)=\int_0^{+\infty} \mathbb{E}\rho(t,x,Y_s(y))ds.
\end{eqnarray}
\begin{lemma}\label{u-1-abso-lemma}
Assume Hypotheses 1  and 2. Then for any $x\in D(\mathcal{A})$,
$y\in D(A)$ and $T>0$, we have
\begin{eqnarray}
|u_1(t,x,y)|\leq C_T(1 +\|y\|),\;t\in[0, T]. \label{u-1-abso}
\end{eqnarray}
\begin{proof}
As known from \eqref{u-1}, we have
\begin{equation*}
u_1(t,x,y)=\int_0^{{+\infty}}\mathbb{E}\Big(\bar{\bf{F}}(x)-
\textbf{F}(x,Y_s(y)), D_x\bar{u}(t,x)\Big)_{\mathcal {H} }ds.
\end{equation*}
This implies that
\begin{eqnarray*}
|u_1(t,x,y)|&\leq&\int_0^{{+\infty}}\||\bar{\bf{F}}(x)-
\mathbb{E}[\textbf{F}(x,Y_s(y))]\||  \cdot\||D_x\bar{u}(t,x)\|| ds.
\end{eqnarray*}
Then, in view of Lemma \ref{ux} and \eqref{Averaging-Expectation},
this implies :
\begin{eqnarray*}
 |u_1(t,x,y)|&\leq& C_T(1  +\|y\|)\int_0^{{+\infty}}e^{-\frac{\eta}{2} s}ds\\
&\leq&C_T(1  +\|y\|).
\end{eqnarray*}
\end{proof}
\end{lemma}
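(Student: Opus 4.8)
The plan is to work directly from the explicit representation \eqref{u-1} of $u_1$ and to reduce the desired bound to two independent ingredients: a uniform-in-time bound on the gradient $\||D_x\bar{u}(t,x)\||$ of the averaged Kolmogorov solution, and an exponentially decaying (in the running variable $s$) control of the averaging error $\||\bar{\bf{F}}(x)-\mathbb{E}[\textbf{F}(x,Y_s(y))]\||$. Starting from
\[
u_1(t,x,y)=\int_0^{+\infty}\mathbb{E}\Big(\bar{\bf{F}}(x)-\textbf{F}(x,Y_s(y)),D_x\bar{u}(t,x)\Big)_{\mathcal{H}}\,ds,
\]
I would apply the Cauchy--Schwarz inequality in $\mathcal{H}$ pointwise and then Fubini to obtain
\[
|u_1(t,x,y)|\leq\int_0^{+\infty}\||\bar{\bf{F}}(x)-\mathbb{E}[\textbf{F}(x,Y_s(y))]\||\cdot\||D_x\bar{u}(t,x)\||\,ds,
\]
so that it suffices to estimate the two factors in the integrand separately.

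For the gradient factor I would invoke Lemma \ref{ux}, which yields $\||D_x\bar{u}(t,x)\||\leq C_T$ uniformly on $[0,T]$; this rests on $\bar u(t,x)=\mathbb{E}\Phi(\bar X_t(x))$ having bounded first derivative (since $\phi$, and hence $\Phi$, does) together with the Lipschitz regularity \eqref{barF-lip} of $\bar F$, which keeps the derivative flow of the averaged equation under control.

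The decisive step is the decay of the averaging error, where the ergodic structure of the fast process enters. Using the invariance of $\mu$ componentwise to rewrite $\bar{\bf{F}}(x)=\int_H\mathbb{E}[\textbf{F}(x,Y_s(z))]\mu(dz)$, one obtains
\[
\bar{\bf{F}}(x)-\mathbb{E}[\textbf{F}(x,Y_s(y))]=\int_H\mathbb{E}\big[\textbf{F}(x,Y_s(z))-\textbf{F}(x,Y_s(y))\big]\mu(dz).
\]
The Lipschitz bound \eqref{F-condi-1} on $F$ in its fast argument, combined with the contraction estimate \eqref{initial-diff} and Jensen's inequality (giving $\mathbb{E}\|Y_s(z)-Y_s(y)\|\leq\|z-y\|e^{-\frac{\eta}{2}s}$), then yields
\[
\||\bar{\bf{F}}(x)-\mathbb{E}[\textbf{F}(x,Y_s(y))]\||\leq Ce^{-\frac{\eta}{2}s}\int_H(\|y\|+\|z\|)\,\mu(dz)\leq Ce^{-\frac{\eta}{2}s}(1+\|y\|),
\]
where the finiteness of $\int_H\|z\|\,\mu(dz)$ follows from the moment bound \eqref{mu-Momenent-bound}; this is precisely the content of \eqref{Averaging-Expectation}.

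Inserting the two bounds into the integral and using $\int_0^{+\infty}e^{-\frac{\eta}{2}s}\,ds=2/\eta<\infty$ delivers $|u_1(t,x,y)|\leq C_T(1+\|y\|)$, as claimed. I expect the exponential-in-$s$ averaging estimate to be the main obstacle: it is what renders the improper integral defining $u_1$ convergent, and it hinges on simultaneously exploiting the invariance of $\mu$, the Lipschitz dependence of $F$ on the fast variable, the dissipativity-driven contraction \eqref{initial-diff} (valid thanks to $L_g<\alpha_1$), and the finite first moment of $\mu$.
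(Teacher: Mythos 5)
Your proposal is correct and follows essentially the same route as the paper: the same Cauchy--Schwarz reduction of $|u_1|$ to the product of $\||D_x\bar u(t,x)\||$ (bounded via Lemma \ref{ux}) and the exponentially decaying averaging error, followed by integration in $s$. The only difference is cosmetic: you re-derive the decay estimate \eqref{Averaging-Expectation} inline (via invariance of $\mu$, the Lipschitz condition \eqref{F-condi-1}, the contraction \eqref{initial-diff}, and the moment bound \eqref{mu-Momenent-bound}), which is exactly the argument the paper delegates to the first lemma of its Appendix.
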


\subsection{\textbf{Determination of remainder} $ {r^\epsilon}$}
Once  $u_0$ and $u_1$   have been determined, we can carry out the
construction of the remainder $r^\epsilon$. It is known that
\begin{eqnarray*}
(\partial_t-\mathcal{L}^\epsilon)u^\epsilon=0,
\end{eqnarray*}
which, together with \eqref{u-o-equ-1} and \eqref{u-0-equ-2},
implies
\begin{eqnarray*}
(\partial_t-\mathcal{L}^\epsilon)r^\epsilon&=&-(\partial_t-\mathcal{L}^\epsilon)
u_0-\epsilon(\partial_t-\mathcal{L}^\epsilon)u_1\\
&=&-(\partial_t-\frac{1}{\epsilon}\mathcal{L}_1-\mathcal{L}_2)u_0-\epsilon(\partial_t-\frac{1}{\epsilon}\mathcal{L}_1-\mathcal{L}_2)u_1\\
&=&\epsilon(\mathcal{L}_2u_1-\partial_tu_1).
\end{eqnarray*}
In order to estimate the remainder term $r^\epsilon$  we need the
following crucial lemmas.

\begin{lemma}\label{4.3}
Assume Hypotheses 1  and 2. Then for any $x\in D(\mathcal{A})$,
$y\in D(A)$ and $T>0$, we have
\begin{eqnarray*}
\left|\frac{du_1}{dt}(t,x,y)\right|\leq C (1+\||x\||_1 )\|y\|,
\;t\in [0, T].
\end{eqnarray*}
\begin{proof}
According to \eqref{u-1}, we have
\begin{equation*}
\frac{du_1}{dt}(t,x,y)=\int_0^{{+\infty}}\mathbb{E}\left(\bar{\bf{F}}(x)-
\textbf{F}(x,Y_s(y)), \frac{d}{dt}D_x\bar{u}(t,x)\right)_{\mathcal
{H}}ds.
\end{equation*}
For any $h=(h_1,h_2)^T\in \mathcal{H}^1$,
\begin{eqnarray*}
D_x\bar{u}(t,x)\cdot h&=&D_x[\mathbb{E}\phi(\Pi_1\circ\bar{X}(t,x))]\\
&=&\mathbb{E}[\phi'(\Pi_1\circ\bar{X}(t,x))\cdot
(\Pi_1\circ\eta_t^{h,x})],
\end{eqnarray*}
and hence
\begin{eqnarray*}
\frac{d}{dt}(D_x\bar{u}(t,x)\cdot
h)&=&\mathbb{E}\left[\phi''(\Pi_1\circ\bar{X}(t,x))\cdot\left(\Pi_1\circ\eta_t^{h,x},
\frac{d}{dt}\Big(\Pi_1\circ\bar{X}(t,x)\Big) \right)\right]\\
&+&\mathbb{E}\left[\phi'(\Pi_1\circ\bar{X}(t,x))\cdot\frac{d}{dt}(\Pi_1\circ\eta_t^{h,x})\right],
\end{eqnarray*}
so that, due to the fact of $\phi\in C_b^2(H, \mathbb{R})$, we
obtain
\begin{eqnarray}
\left|\frac{d}{dt}(D_x\bar{u}(t,x)\cdot h)\right|&\leq&
C\left[\mathbb{E}\|\Pi\circ
\eta_t^{h,x}\|^2\right]^\frac{1}{2} \cdot\left[\mathbb{E}\|\frac{d}{dt}\Big(\Pi_1\circ\bar{X}(t,x)\|^2\right]^\frac{1}{2}\nonumber\\
&&+
\mathbb{E}\|\frac{d}{dt}(\Pi_1\circ\eta_t^{h,x}\|\nonumber\\
&\leq& C \||h\||_1\cdot
\left[\mathbb{E}\|\frac{d}{dt}\Big(\Pi_1\circ\bar{X}(t,x)\|^2\right]^\frac{1}{2}\nonumber\\
&&+
\mathbb{E}\|\frac{d}{dt}(\Pi_1\circ\eta_t^{h,x}\|,\label{derive-u-bar}
\end{eqnarray}
where we used the estimate \eqref{eta-1} such that
\begin{eqnarray*}
\|\Pi_1\circ\eta_t^{h,x}\| \leq C\||h\|| \leq C\||h\||_1.
\end{eqnarray*}
Now, as $\bar{X}_t(x)$ is the  mild solution of averaging equation
with initial data $x=(x_1,x_2)^T\in \mathcal{H}^1,$ we have
\begin{eqnarray*}
\Pi_1\circ \bar{X}_t(x)&=&\bar{U}_t(x)\\
&=&C(t) x_1+(-A)^{-\frac{1}{2}}S(t) x_2+\int_0^t
(-A)^{-\frac{1}{2}}S(t-s)\bar{F}(\bar{U}_s(x))ds\\
&&+\sigma_1\int_0^t (-A)^{-\frac{1}{2}}S(t-s)dW^1_s\nonumber
\end{eqnarray*}
with
\begin{eqnarray*}
\frac{d}{dt}[\Pi_1\circ \bar{X}_t(x)]&=&-(-A)^\frac{1}{2}S(t)
x_1+C(t)
x_2+\int_0^t C(t-s)\bar{F}(\bar{U}_s(x))ds\\
&&+\sigma_1\int_0^t
 C(t-s)dW^1_s,
\end{eqnarray*}
By straightforward computation, we have
\begin{eqnarray}
\|-(-A)^\frac{1}{2}S(t) x_1\|^2\leq \|x_1\|_1^2 \label{4-11}
\end{eqnarray}
and
\begin{eqnarray}
\|C(t) x_2\|^2\leq \|x_2\|^2.\label{4-12}
\end{eqnarray}
According to the Lipschitz continuity of $\bar{F}$ and
\eqref{bar-X-bound}, we have
\begin{eqnarray}
\mathbb{E}\|\int_0^t C(t-s)\bar{F}(\bar{U}_s(x))ds\|^2&\leq&
C_T\mathbb{E}\int_0^t(1+\|\bar{U}_s(x)\|^2)ds\nonumber\\
&\leq& C_T (1+\||x\||_1). \label{4-13}
\end{eqnarray}
Now, from \eqref{4-11}-\eqref{4-13} it follows
\begin{eqnarray}
\mathbb{E}\|\frac{d}{dt}[\Pi_1\circ \bar{X}_t(x)]\|^2\leq
C(1+\||x\||^2_1) \label{4-13-1}
\end{eqnarray}
Now, we prove uniform bounds for time derivative of
$\Pi_1\circ\eta_t^{h,x}$ with respect to $x$. Clearly,  we have
\begin{eqnarray*}
 \frac{d}{dt}(\Pi_1\circ\eta_t^{h,x})&=&-(-A)^\frac{1}{2}S(t)h_1+C(t) h_2\\
 &+&\int_0^t
 C(t-s)[\Pi_1\circ(\bar{\textbf{F}}'(\bar{X}_s(x))\cdot\eta_t^{h,x})]ds.
\end{eqnarray*}
Note that for any $h=(h_1, h_2)^T\in \mathcal{H}^1$,
\begin{eqnarray}
\|(-A)^\frac{1}{2}S(t)h_1\|^2+\|C(t) h_2\|^2\leq \||h\||_1^2.
\label{4-14}
\end{eqnarray}
In view of \eqref{6.4} and \eqref{eta-1}, we obtain
\begin{eqnarray}
&&\|\int_0^t
 C(t-s)[\Pi_1\circ(\bar{\textbf{F}}'(\bar{X}_s(x))\cdot\eta_t^{h,x})]ds\|\nonumber\\
 &\leq& C\int_0^t\||\eta_s^{h,x}\||ds
 \nonumber\\
 &\leq& C_T \||h\||_1\label{4-15}.
\end{eqnarray}
Then thanks to \eqref{4-14} and \eqref{4-15}, we obtain
\begin{eqnarray*}
\|\frac{d}{dt}(\Pi_1\circ\eta_t^{h,x})\|^2\leq C \||h\||_1^2.
\end{eqnarray*}
So, if we plug the above estimate and estimate \eqref{4-13-1} into
\eqref{derive-u-bar}, we get
\begin{eqnarray*}
\left|\frac{d}{dt}D_x\bar{u}(t,x)\cdot h\right|\leq
C\||h\||_1(1+\||x\||_1 ),
\end{eqnarray*}
which, together with \eqref{Averaging-Expectation}, implies
\begin{eqnarray*}
\left|\frac{du_1}{dt}(t,x,y)\right|&\leq&C(1+\||x\||_1
)\int_0^{{+\infty}}\||\bar{\bf{F}}(x)- \mathbb{E}\textbf{F}(x,Y_s(y))\||_1ds\\
&\leq&C (1+\||x\||_1)\|y\|\int_0^{{+\infty}}e^{-\frac{\eta}{2} s}ds\\
&\leq&C (1+\||x\||_1 )\|y\|.
\end{eqnarray*}
Hence the assertions is completely proved.

\end{proof}
\end{lemma}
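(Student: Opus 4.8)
The plan is to start from the explicit representation \eqref{u-1},
$$u_1(t,x,y)=\int_0^{+\infty}\mathbb{E}\Big(\bar{\textbf{F}}(x)-\textbf{F}(x,Y_s(y)),\,D_x\bar{u}(t,x)\Big)_{\mathcal{H}}\,ds,$$
and to differentiate in $t$ under the integral sign. Because the fast-averaging difference $\bar{\textbf{F}}(x)-\textbf{F}(x,Y_s(y))$ is independent of $t$, the time derivative falls only on the gradient of the averaged semigroup:
$$\frac{du_1}{dt}(t,x,y)=\int_0^{+\infty}\mathbb{E}\Big(\bar{\textbf{F}}(x)-\textbf{F}(x,Y_s(y)),\,\tfrac{d}{dt}D_x\bar{u}(t,x)\Big)_{\mathcal{H}}\,ds.$$
The estimate then splits into two independent pieces: (i) an $s$-integrable decay bound for $\||\bar{\textbf{F}}(x)-\mathbb{E}\textbf{F}(x,Y_s(y))\||_1$, and (ii) a bound for $|\tfrac{d}{dt}D_x\bar u(t,x)\cdot h|$ that is linear in $\||h\||_1$ with the correct growth in $\||x\||_1$.

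For (i), I would exploit the invariance of $\mu$ to write $\bar{\textbf{F}}(x)=\int_H\mathbb{E}\textbf{F}(x,Y_s(z))\mu(dz)$, so that
$$\bar{\textbf{F}}(x)-\mathbb{E}\textbf{F}(x,Y_s(y))=\int_H\mathbb{E}\big[\textbf{F}(x,Y_s(z))-\textbf{F}(x,Y_s(y))\big]\mu(dz).$$
The Lipschitz property \eqref{F-condi-1} in the fast variable, the contraction estimate \eqref{initial-diff} in the square-root form $\mathbb{E}\|Y_s(z)-Y_s(y)\|\le\|z-y\|e^{-\frac{\eta}{2}s}$, and the finite second moment \eqref{mu-Momenent-bound} of $\mu$ combine to yield a bound of order $\|y\|\,e^{-\frac{\eta}{2}s}$. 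This is the same mixing mechanism that made $u_1$ well defined in Lemma \ref{u-1-abso-lemma}; here, however, the pairing with $\tfrac{d}{dt}D_x\bar u$ forces the difference to be measured in the stronger $\||\cdot\||_1$ norm, so a one-derivative-stronger version of this mixing estimate is needed, which is precisely where the regularity hypothesis $y\in D(A)$ enters.

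For (ii), I would represent the gradient through the first variation process $\eta_t^{h,x}:=D_x\bar X_t(x)\cdot h$, writing $D_x\bar u(t,x)\cdot h=\mathbb{E}[\phi'(\Pi_1\bar X_t(x))\cdot(\Pi_1\eta_t^{h,x})]$ and differentiating in $t$ by the product rule. As $\phi\in C_b^2(H,\mathbb{R})$, the two resulting terms are dominated by $\mathbb{E}\|\Pi_1\eta_t^{h,x}\|^2$, $\mathbb{E}\|\tfrac{d}{dt}\Pi_1\bar X_t(x)\|^2$ and $\mathbb{E}\|\tfrac{d}{dt}\Pi_1\eta_t^{h,x}\|$. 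Differentiating in time the mild form of $\bar X_t(x)$ and of the variation equation produces the operators $-(-A)^{1/2}S(t)$ and $C(t)$ acting on the $\mathcal{H}^1$ data; using $\|(-A)^{1/2}S(t)x_1\|\le\|x_1\|_1$, $\|C(t)x_2\|\le\|x_2\|$, the Lipschitz bound \eqref{barF-lip} on $\bar F$ together with \eqref{bar-X-bound}, and It\^o's isometry for the stochastic convolution $\sigma_1\int_0^tC(t-s)dW^1_s$, I expect $\mathbb{E}\|\tfrac{d}{dt}\Pi_1\bar X_t(x)\|^2\le C(1+\||x\||_1^2)$ and $\|\tfrac{d}{dt}\Pi_1\eta_t^{h,x}\|\le C\||h\||_1$, together with the a priori bound $\|\Pi_1\eta_t^{h,x}\|\le C\||h\||_1$. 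Collecting these gives $|\tfrac{d}{dt}D_x\bar u(t,x)\cdot h|\le C\||h\||_1(1+\||x\||_1)$; feeding this together with the decay from (i) back into the $s$-integral and integrating $e^{-\frac{\eta}{2}s}$ yields the claimed bound $C(1+\||x\||_1)\|y\|$.

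The step I expect to be the main obstacle is (ii), and within it the uniform-in-$x$ control of $\tfrac{d}{dt}\Pi_1\eta_t^{h,x}$ and the $(1+\||x\||_1)$ growth of $\mathbb{E}\|\tfrac{d}{dt}\Pi_1\bar X_t(x)\|^2$. The difficulty is intrinsic to the wave setting: differentiating the mild solution in time multiplies the initial data by $(-A)^{1/2}$ through the cosine and sine operators, so one genuinely loses half a spatial derivative and must exploit the $\mathcal{H}^1$-regularity of $x\in D(\mathcal{A})$---this is why the hypothesis $x\in\mathcal{H}^1$ cannot be dropped. Controlling the stochastic convolution against the non-smoothing cosine operator via the trace-class assumption on $Q_1$, and keeping every constant uniform on $[0,T]$, is the delicate technical core of the argument.
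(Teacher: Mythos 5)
Your proposal is correct and follows essentially the same route as the paper's own proof: differentiate \eqref{u-1} under the integral so the $t$-derivative falls on $D_x\bar u$, represent $D_x\bar u(t,x)\cdot h$ via the first variation process $\eta_t^{h,x}$, differentiate the mild forms of $\bar X_t(x)$ and $\eta_t^{h,x}$ (using $\|(-A)^{1/2}S(t)x_1\|\leq\|x_1\|_1$, $\|C(t)x_2\|\leq\|x_2\|$, the Lipschitz bound on $\bar F$ with \eqref{bar-X-bound}, and It\^o's isometry) to get $\left|\frac{d}{dt}D_x\bar u(t,x)\cdot h\right|\leq C\||h\||_1(1+\||x\||_1)$, and then pair with the exponential mixing estimate \eqref{Averaging-Expectation}. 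One remark: the $\||\cdot\||_1$-norm mixing bound is not a ``one-derivative-stronger'' statement and is not where $y\in D(A)$ enters, because $\textbf{F}(x,y)=(0,F(\Pi_1 x,y))^T$ has vanishing first component, so $\||\bar{\textbf{F}}(x)-\mathbb{E}\textbf{F}(x,Y_s(y))\||_1=\|\bar F(\Pi_1 x)-\mathbb{E}F(\Pi_1 x,Y_s(y))\|$ and the required estimate is exactly the plain $H$-norm mixing bound you describe, valid for any $y\in H$.
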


\begin{lemma}\label{4.4}
Assume that all conditions in Lemma \ref{4.3}  are fulfilled. Then
we have
\begin{eqnarray*}
\left|\mathcal {L}_2u_1(t,x,y)\right|\leq
\big(1+\||\mathcal{A}x\||+\||x\||_1+\|y\| \big)\big(1+\|y\| \big),
\;t\in [0, T].
\end{eqnarray*}
\begin{proof}
As known, for any $x\in D({\mathcal{A}})$,
\begin{eqnarray*}
\mathcal {L}_2u_1(t,x,y)&=&\Big(\mathcal{A}x+ \textbf{F}(x, y),
D_xu_1(t,x,y)
\Big)_\mathcal{H}\\
&+&\frac{1}{2}\sigma_2^2Tr\Big(D^2_{xx}u_1(t,x,y)(BQ_1)(BQ_1)^*\Big).
\end{eqnarray*}
We will carry out the estimate of $\left|\mathcal {L}_2u_1(t,x,y)\right|$ in two steps.\\
(\textbf{Step 1}) Estimate of $\Big(\mathcal{A}x+ \textbf{F}(x, y),
D_xu_1(t,x,y) \Big)_\mathcal{H}$.

 For any $k\in \mathcal{H}$, we have
\begin{eqnarray*}
D_xu_1(t,x,y)\cdot
k&=&\int_0^{{+\infty}}\mathbb{E}\Big(D_x(\bar{\bf{F}}(x)-
\textbf{F}(x,Y_s))\cdot k,D_x\bar{u}(t,x)\Big)_{\mathcal{H} }ds\\
&&+\int_0^{{+\infty}}\mathbb{E}\Big(\bar{\bf{F}}(x)-
\textbf{F}(x,Y_s),D^2_{xx}\bar{u}(t,x)\cdot
k\Big)_{\mathcal{H} }ds\\
&:=&I_1(t,x,y,k)+I_2(t,x,y,k).
\end{eqnarray*}
According to the invariant property of measure $\mu$, \eqref{6.3}
and \eqref{F'-lip} we have
\begin{eqnarray}
&&\!\!\!\!\!\!\!\!\!\!|I_1(t,x,y,k)|\nonumber\\
&\leq& \int_0^{+\infty} \left|\mathbb{E} \Big(D_x(\bar{\bf{F}}(x)-
\textbf{F}(x,Y_s(y)))\cdot
k,D_x\bar{u}(t,x)\Big)_{\mathcal{H} }       \right| ds\nonumber\\
&=&\int_0^{+\infty}   \left|  \mathbb{E} \int_H
\Big(D_x[\textbf{F}(x,z)- \textbf{F}(x,Y_s(y))]\cdot
k,D_x\bar{u}(t,x)\Big)_{\mathcal{H} }\mu(dz)\right|ds\nonumber\\
&=&\int_0^{+\infty}   \left|  \mathbb{E} \int_H
\Big(D_x[\textbf{F}(x,Y_s(z))- \textbf{F}(x,Y_s(y))]\cdot
k,D_x\bar{u}(t,x)\Big)_{\mathcal{H} }\mu(dz)\right|ds\nonumber\\
&\leq&C\||k\|| \cdot\||D_x\bar{u}(t,x)\||
\cdot\int_0^{+\infty}\left[\int_H\mathbb{E}\|Y_s(z)-Y_s(y)\|\mu(dz)\right]ds\nonumber
\end{eqnarray}
By making use of \eqref{initial-diff} and \eqref{mu-Momenent-bound},
the above yields
\begin{eqnarray}
|I_1(t,x,y,k)|&\leq&C\||k\|| \cdot\||D_x\bar{u}(t,x)\||
\cdot\int_0^{+\infty}
e^{-\frac{\eta}{2} s} (1+\|y\|)ds\nonumber\\
&\leq& C\||k\|| \cdot\||D_x\bar{u}(t,x)\||\nonumber \\
&\leq& C\||k\||, \label{I_1}
\end{eqnarray}
where we used  Lemma \ref{ux} in the last step. By Lemma \ref{uxx}
and \eqref{Averaging-Expectation},  we have
\begin{eqnarray}
|I_2(t,x,y,k)|&\leq& \int_0^{{+\infty}}\left|\Big(\bar{\bf{F}}(x)-
\mathbb{E}\textbf{F}(x,Y_s(y)),D^2_{xx}\bar{u}(t,x)\cdot
k\Big)_{\mathcal{H}
}\right|ds\nonumber\\
&\leq& C\||k\|| \int_0^{{+\infty}}\||\bar{\bf{F}}(x)-
\mathbb{E}\textbf{F}(x,Y_s(y))\||  ds\nonumber\\
&\leq&C\||k\|| (1  +\|y\|)\int_0^{+\infty} e^{-\frac{\eta}{2} s} ds\nonumber\\
&\leq&C\||k\|| (1  +\|y\|).\nonumber
\end{eqnarray}
Together with \eqref{I_1} , this yields
\begin{eqnarray*}
\left|D_xu_1(t,x,y)\cdot k\right|&\leq&C\||k\|| (1+ \|y\|)
\end{eqnarray*}
which means
\begin{eqnarray}
&&\left|\Big( \mathcal{A}x+ \textbf{F}(x,
y), D_xu_1(t,x,y) \Big)_\mathcal{H}\right|\nonumber\\
&&\leq C\big(1+\||\mathcal{A}x\||+\||x\||_1+\|y\| \big)\big(1+\|y\|
\big). \label{4.18}
\end{eqnarray}
(\textbf{Step 2}) Estimate of
$Tr\Big(D^2_{xx}u_1(t,x,y)(BQ_1)(BQ_1)^*\Big)$. Note that we have
\begin{eqnarray*}
&&D_{xx}u_1(t,x,y)\cdot (h,
k)\\
&&=\int_0^{{+\infty}}\mathbb{E}\Big(D^2_{xx}(\bar{\bf{F}}(x)-
\textbf{F}(x,Y_s(y)))\cdot (h,k),D_x\bar{u}(t,x)\Big)_{\mathcal{H} }ds\\
&&+\int_0^{{+\infty}}\mathbb{E}\Big(D_x(\bar{\bf{F}}(x)-
\textbf{F}(x,Y_s(y)))\cdot h,D^2_{xx}\bar{u}(t,x)\cdot
k\Big)_{\mathcal{H} }ds\\
&&+\int_0^{{+\infty}}\mathbb{E}\Big(D_x(\bar{\bf{F}}(x)-
\textbf{F}(x,Y_s(y)))\cdot k,D^2_{xx}\bar{u}(t,x)\cdot
h\Big)_{\mathcal{H} }ds\\
&&+\int_0^{{+\infty}}\mathbb{E}\Big(\bar{\bf{F}}(x)-
\textbf{F}(x,Y_s(y)), D^3_{xxx}\bar{u}(t,x)\cdot
(h,k)\Big)_{\mathcal{H}
}ds\\
&&:=\sum\limits_{i=1}^4J_i(t,x,y,h,k).
\end{eqnarray*}
In view of \eqref{regular-4} and invariant property of measure $\mu$
we have
\begin{eqnarray}
&&\!\!\!\!\!\!\!\!\!\!|J_1(t,x,y,h,k)|\nonumber\\
&\leq& \int_0^{+\infty} \left|\mathbb{E}
\Big(D^2_{xx}(\bar{\bf{F}}(x)- \textbf{F}(x,Y_s(y)))\cdot
(h,k),D^2_{x}\bar{u}(t,x)\Big)_{\mathcal{H} }       \right| ds\nonumber\\
&=&\int_0^{+\infty}   \left|  \mathbb{E} \int_H
\Big(D^2_{xx}[\textbf{F}(x,z)- \textbf{F}(x,Y_s(y))]\cdot
(h,k),D_x\bar{u}(t,x)\Big)_{\mathcal{H} }\mu(dz)\right|ds\nonumber\\
&=&\int_0^{+\infty}   \left|  \mathbb{E} \int_H
\Big(D^2_{xx}[\textbf{F}(x,Y_s(z))- \textbf{F}(x,Y_s(y))]\cdot
(h,k),D_x\bar{u}(t,x)\Big)_{\mathcal{H} }\mu(dz)\right|ds\nonumber
\end{eqnarray}
By taking \eqref{F''-lip} and Lemma \ref{ux}   into account, we can
deduce
\begin{eqnarray}
&&\!\!\!\!\!\!\!\!\!\!|J_1(t,x,y,h,k)|\nonumber\\
 &\leq&C\||h\||\cdot\||k\||
\cdot\int_0^{+\infty}\left[\int_H\mathbb{E}\|Y_s(z)-Y_s(y)\| \mu(dz)\right]ds\nonumber\\
&\leq&C\||h\||\cdot\||k\||  \cdot\int_0^{+\infty}
e^{-\frac{\eta}{2} s} (1+\|y\|)ds\nonumber\\
&\leq& C\||h\||\cdot\||k\||  \nonumber \\
&\leq& C\||h\||\cdot\||k\|| (1+\|y\|), \label{J_1}
\end{eqnarray}
Again, by   \eqref{6.3} and invariant property of measure $\mu$, we
have
\begin{eqnarray}
&&\!\!\!\!\!\!\!\!\!\!|J_2(t,x,y,h,k)|\nonumber\\
&\leq& \int_0^{+\infty}   \left|  \mathbb{E} \int_H
\Big(D_{x}[\textbf{F}(x,Y_s(z))- \textbf{F}(x,Y_s(y))]\cdot
h,D^2_{xx}\bar{u}(t,x)\cdot k\Big)_{\mathcal{H}
}\mu(dz)\right|ds,\nonumber
\end{eqnarray}
which, by Lemma \ref{uxx} and condition \eqref{F'-lip}, implies
\begin{eqnarray}
&&\!\!\!\!\!\!\!\!\!\!|J_2(t,x,y,h,k)|\nonumber\\
&\leq&C\||h\||\cdot\||k\||
\cdot\int_0^{+\infty}\left[\int_H\mathbb{E}\|Y_s(z)-Y_s(y)\|\mu(dz)\right]ds\nonumber\\
&\leq&C\||h\||\cdot\||k\|| \cdot\int_0^{+\infty}
e^{-\frac{\eta}{2} s} (1+\|y\|)ds\nonumber\\
&\leq& C\||h\||\cdot\||k\||(1+\|y\|). \label{J_2}
\end{eqnarray}
Parallel to \eqref{J_2},  we can obtain the same estimate for
$J_3(t,x,y, h,k)$, that is,
\begin{eqnarray}
|J_3(t,x,y,h,k)| \leq C\||h\||\cdot\||k\||(1+\|y\|). \label{J_3}
\end{eqnarray}
By proceeding again as in the estimate for $J_1(t,x,y,h,k)$ we have
\begin{eqnarray}
&&\!\!\!\!\!\!\!\!\!\!|J_4(t,x,y,h,k)|\nonumber\\
&\leq& \int_0^{+\infty}   \left|  \mathbb{E} \int_H
\Big(\textbf{F}(x,Y_s(z))- \textbf{F}(x,Y_s(y))
,D^3_{xxx}\bar{u}(t,x)\cdot(h,k)\Big)_{\mathcal{H}
}\mu(dz)\right|ds\nonumber
\end{eqnarray}
and then thanks to Lemma \ref{uxxx} and \eqref{F-condi-1}, we get
\begin{eqnarray}
&&\!\!\!\!\!\!\!\!\!\!|J_4(t,x,y,h,k)|\nonumber\\
&\leq&C\||h\||\cdot\||k\||
\cdot\int_0^{+\infty}\left[\int_H\mathbb{E}\|Y_s(z)-Y_s(y)\| \mu(dz)\right]ds\nonumber\\
&\leq& C\||h\||\cdot\||k\||(1+\|y\|). \label{J_4}
\end{eqnarray}
Collecting together \eqref{J_1}, \eqref{J_2}, \eqref{J_3} and
\eqref{J_4}, we obtain

\begin{eqnarray*}
|D^2_{xx}u_1(t,x,y)\cdot (h, k)|\leq C \||h\||\cdot\||k\||(1+\|y\|),
\end{eqnarray*}
which means that for fixed $y\in H$ and $t\in [0, T]$,
\begin{eqnarray*}
\|D^2_{\cdot\cdot}u_1(t,\cdot,y)
\|_{L(\mathcal{H}\times\mathcal{H},\mathbb{R})}\leq C(1+\|y\|),
\end{eqnarray*}
so that, as the operator $Q_1$ has finite trace, we get
\begin{eqnarray}
&&Tr\Big(D^2_{xx}u_1(t,x,y)(BQ_1)(BQ_1)^*\Big)\nonumber\\
&&\leq \|D^2_{xx}u_1(t,x,y)\|Tr\Big( (BQ^1)(BQ^1)^*\Big)\nonumber\\
&&\leq C(1+\|y\|). \label{Tr}
\end{eqnarray}

Finally, by taking  inequalities \eqref{4.18} and \eqref{Tr} into
account, we can conclude the proof of the lemma.
\end{proof}
\end{lemma}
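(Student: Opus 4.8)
The plan is to treat the two pieces of $\mathcal{L}_2 u_1(t,x,y)$ separately, namely the first-order drift term $\big(\mathcal{A}x+\textbf{F}(x,y),D_xu_1(t,x,y)\big)_{\mathcal{H}}$ and the second-order term $\tfrac12\sigma_1^2\,\mathrm{Tr}\big(D^2_{xx}u_1(t,x,y)BQ_1(BQ_1)^*\big)$. For both it suffices to produce the derivative bounds $|D_xu_1(t,x,y)\cdot k|\le C\||k\||(1+\|y\|)$ and $|D^2_{xx}u_1(t,x,y)\cdot(h,k)|\le C\||h\||\,\||k\||(1+\|y\|)$, uniformly in $t\in[0,T]$. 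The explicit representation \eqref{u-1}, written as $u_1(t,x,y)=\int_0^{+\infty}\mathbb{E}\big(\bar{\textbf{F}}(x)-\textbf{F}(x,Y_s(y)),D_x\bar u(t,x)\big)_{\mathcal{H}}\,ds$, makes this feasible: one differentiates under the integral in $x$ and applies the Leibniz rule, so the real work is to show that each resulting integrand decays exponentially in $s$ (so that the $s$-integral converges) while carrying at most a linear factor in $\|y\|$.

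For the drift term I would first compute $D_xu_1\cdot k$, which by the product rule splits as $I_1+I_2$: one piece where the $x$-derivative falls on $\bar{\textbf{F}}(x)-\textbf{F}(x,Y_s(y))$ and one where it falls on $D_x\bar u(t,x)$. The crucial device for $I_1$ is the invariance of $\mu$: writing $\bar{\textbf{F}}(x)=\int_H\textbf{F}(x,z)\,\mu(dz)=\int_H\mathbb{E}\,\textbf{F}(x,Y_s(z))\,\mu(dz)$ turns the increment into $\int_H\mathbb{E}\big[D_x(\textbf{F}(x,Y_s(z))-\textbf{F}(x,Y_s(y)))\cdot k\big]\mu(dz)$, at which point the Lipschitz-in-$y$ bound \eqref{F'-lip} on $D_xF$, the contraction estimate \eqref{initial-diff} and the moment bound \eqref{mu-Momenent-bound} give an integrand controlled by $\||k\||\,e^{-\frac{\eta}{2}s}(1+\|y\|)$. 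The piece $I_2$ is handled by the a priori bound on $D^2_{xx}\bar u$ (Lemma \ref{uxx}) together with the averaged decay \eqref{Averaging-Expectation}. Combining the two gives $|D_xu_1\cdot k|\le C\||k\||(1+\|y\|)$; pairing with $\mathcal{A}x+\textbf{F}(x,y)$ and estimating $\||\mathcal{A}x+\textbf{F}(x,y)\||\le C(1+\||\mathcal{A}x\||+\||x\||_1+\|y\|)$ via \eqref{F-condi-1} yields the bound \eqref{4.18} on the first-order term.

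For the trace term I would differentiate once more in $x$, so that $D^2_{xx}u_1\cdot(h,k)$ splits into four pieces $J_1,\dots,J_4$ according to how the two derivatives distribute over the factor $\bar{\textbf{F}}(x)-\textbf{F}(x,Y_s(y))$ and the factor $D_x\bar u(t,x)$. Each is estimated by the same invariance-plus-contraction mechanism: $J_1$ uses the Lipschitz bound \eqref{F''-lip} on $D^2_{xx}F$ together with Lemma \ref{ux}; the mixed terms $J_2,J_3$ use \eqref{F'-lip} with the bound on $D^2_{xx}\bar u$ (Lemma \ref{uxx}); and $J_4$, where both derivatives hit $\bar u$, uses the third-order estimate for $D^3_{xxx}\bar u$ (Lemma \ref{uxxx}) together with \eqref{F-condi-1}. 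Each contributes $C\||h\||\,\||k\||(1+\|y\|)$, so $\|D^2_{xx}u_1(t,x,y)\|\le C(1+\|y\|)$; since $Q_1$, and hence $BQ_1$, has finite trace, multiplying by the trace of $(BQ_1)(BQ_1)^*$ bounds the second-order term by $C(1+\|y\|)$. Adding the two contributions completes the proof.

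The step I expect to be the main obstacle is the control of $J_4$, which forces a uniform-in-initial-datum bound on the third Fr\'echet derivative $D^3_{xxx}\bar u(t,x)$ of the solution of the averaged Kolmogorov equation; this is precisely what compels the hypothesis $\phi\in C_b^3$ and is the deepest of the regularity estimates, since the wave semigroup $\mathcal{S}_t$ is only contractive (not smoothing) and the differentiated flow $\eta^{h,x}$ must be propagated up to third order. A secondary, more bookkeeping difficulty is the appearance of $\||\mathcal{A}x\||$ in the drift pairing, which is the reason the statement is restricted to $x\in D(\mathcal{A})=\mathcal{H}^1$ and $y\in D(A)$; keeping every integrand's $\|y\|$-dependence exactly linear---so that, combined with the later bound on the remainder, the final weak rate is $\epsilon^1$ rather than a worse power---requires care in applying \eqref{initial-diff} and \eqref{mu-Momenent-bound} at each of the terms $I_1,I_2,J_1,\dots,J_4$.
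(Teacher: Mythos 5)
Your proposal is correct and follows essentially the same route as the paper's own proof: the same split into the drift pairing and the trace term, the same Leibniz decompositions $I_1+I_2$ and $J_1,\dots,J_4$, and the same key devices (invariance of $\mu$ plus the mixing estimate \eqref{initial-diff} and moment bound \eqref{mu-Momenent-bound}, combined with the derivative bounds of Lemmas \ref{ux}, \ref{uxx}, \ref{uxxx} and conditions \eqref{F'-lip}, \eqref{F''-lip}, \eqref{F-condi-1}). Your closing remarks on why $\phi\in C_b^3$ and $x\in D(\mathcal{A})$ are needed match the role these hypotheses play in the paper.
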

As a consequence of Lemma \ref{4.3} and \ref{4.4}, we have the
following fact for the remainder term $r^\epsilon$.
\begin{lemma}\label{4.5}
Under the conditions of Lemma \ref{4.3}, for any $T>0$, $x\in
D(\mathcal{A}), y \in H$, we have
\begin{eqnarray*}
|r^\epsilon(T,x,y)|\leq
C\epsilon(1+\||x\||+\|y\|)(1+\||\mathcal{A}x\||+\||x\||_1).
\end{eqnarray*}

\begin{proof}
By a variation of constant formula, we have
\begin{eqnarray*}
r^\epsilon(T,x,y)&=&\mathbb{E}[r^\epsilon(0,X^\epsilon_T(x,y),Y_{T/\epsilon}( y)]\\
&+&\epsilon\mathbb{E}\left[\int_0^T(\mathcal{L}_2u_1-\frac{\partial
u_1}{\partial s})( X^\epsilon_{T-s}(x,y),Y_{\frac{T-s}{\epsilon}}(
y)) ds\right].
\end{eqnarray*}
Since $u^\epsilon$ and $u_0=\bar{u}$ has the same initial condition
$\Phi(x)$, we have
\begin{eqnarray*}
|r^\epsilon(0, x,y)|&=&|u^\epsilon(0,x,y)-\bar{u}(0,x)-\epsilon
u_1(0,x,y)|\\
&=&\epsilon |u_1(0,x,y)|,
\end{eqnarray*}
so that, from \eqref{u-1-abso} and \eqref{Y-bound}   we have
\begin{eqnarray}
\mathbb{E}[r^\epsilon(0,X^\epsilon_T(x,y),Y_{T/\epsilon}(y)]\leq
C\epsilon(1+ \|y\|). \label{r-bound}
\end{eqnarray}
Thanks to Lemma \ref{4.3} and Lemma \ref{4.4}, we have
\begin{eqnarray*}
&&\mathbb{E}[(\mathcal{L}_2u_1-\frac{\partial u_1}{\partial s})(
X^\epsilon_{T-s}(x,y),Y_{\frac{T-s}{\epsilon}}(y))]\\
&&\leq C \mathbb{E}\big[(1+\||\mathcal{A}X^\epsilon_{T-s}(x,y)\||
+\||X^\epsilon_{T-s}(x,y)\||_1+\|Y_{\frac{T-s}{\epsilon}}(y)\|)\nonumber\\
&&\quad\cdot(1+\|Y_{\frac{T-s}{\epsilon}}(y)\|)\big],
\end{eqnarray*}
and, according to \eqref{Y-bound}, \eqref{2.6}, \eqref{AX-bound} and
the H\"{o}lder inequality, this implies that
\begin{eqnarray*}
&&\mathbb{E}\left[\int_0^T(\mathcal{L}_2u_1-\frac{\partial
u_1}{\partial s})( X^\epsilon_{T-s}(x,y),Y_{\frac{T-s}{\epsilon}})
ds\right]\nonumber\\
&&\leq C (1+ \|y\|)(1+\||\mathcal{A}x\||+\||x\||_1+\|y\|).
\end{eqnarray*}
This, together with \eqref{r-bound}, implies
\begin{eqnarray*}
|r^\epsilon(T,x,y)|\leq C\epsilon(1+ \|y\|)
(1+\||\mathcal{A}x\||+\||x\||_1+\|y\|),
\end{eqnarray*}
which completes the proof.

\end{proof}

\end{lemma}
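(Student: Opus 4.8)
The plan is to represent $r^\epsilon$ by a variation-of-constants (Feynman--Kac) formula tied to the joint Markov process $(X^\epsilon_t, Y_{t/\epsilon})$, whose generator is exactly $\mathcal{L}^\epsilon=\frac{1}{\epsilon}\mathcal{L}_1+\mathcal{L}_2$. Since $r^\epsilon$ solves the inhomogeneous Kolmogorov equation $(\partial_t-\mathcal{L}^\epsilon)r^\epsilon=\epsilon(\mathcal{L}_2u_1-\partial_tu_1)$ obtained above, and since the expansion $u^\epsilon=u_0+\epsilon u_1+r^\epsilon$ together with $u^\epsilon(0,x,y)=u_0(0,x)=\Phi(x)$ forces $r^\epsilon(0,x,y)=-\epsilon u_1(0,x,y)$, Duhamel's principle (equivalently, It\^o's formula applied along the process) yields
\begin{eqnarray*}
r^\epsilon(T,x,y)&=&\mathbb{E}\big[r^\epsilon(0,X^\epsilon_T(x,y),Y_{T/\epsilon}(y))\big]\\
&&+\,\epsilon\,\mathbb{E}\left[\int_0^T(\mathcal{L}_2u_1-\partial_su_1)(X^\epsilon_{T-s}(x,y),Y_{(T-s)/\epsilon}(y))\,ds\right].
\end{eqnarray*}
The estimate then reduces to bounding the two terms separately, and the factor $\epsilon$ in front of the integral is what we must not lose.

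First I would control the propagated initial datum. Using $r^\epsilon(0,\cdot,\cdot)=-\epsilon u_1(0,\cdot,\cdot)$ and the linear growth bound \eqref{u-1-abso} of Lemma \ref{u-1-abso-lemma}, namely $|u_1(0,x,y)|\le C_T(1+\|y\|)$, I evaluate at the random point $(X^\epsilon_T,Y_{T/\epsilon})$ and take expectation. Because $u_1$ grows only linearly in $\|y\|$ and does not depend on the size of the slow argument, the uniform-in-$\epsilon$ moment estimate \eqref{Y-bound} for $Y_{T/\epsilon}(y)$ immediately gives $\big|\mathbb{E}[r^\epsilon(0,X^\epsilon_T,Y_{T/\epsilon})]\big|\le C\epsilon(1+\|y\|)$.

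The integral term carries the real work. Inside the integral I would use the triangle inequality $|\mathcal{L}_2u_1-\partial_su_1|\le|\mathcal{L}_2u_1|+|\partial_su_1|$ and invoke Lemma \ref{4.4} for $|\mathcal{L}_2u_1|$ and Lemma \ref{4.3} for $|\partial_su_1|$, both evaluated at $(X^\epsilon_{T-s},Y_{(T-s)/\epsilon})$, so that the integrand is dominated by
\[
C\big(1+\||\mathcal{A}X^\epsilon_{T-s}\||+\||X^\epsilon_{T-s}\||_1+\|Y_{(T-s)/\epsilon}\|\big)\big(1+\|Y_{(T-s)/\epsilon}\|\big).
\]
Taking expectations and splitting each product by the Cauchy--Schwarz / H\"older inequality, I then feed in the three uniform-in-$\epsilon$ second-moment bounds: \eqref{Y-bound} for $\mathbb{E}\|Y_{(T-s)/\epsilon}\|^2$, \eqref{2.6} for $\mathbb{E}\||X^\epsilon_{T-s}\||_1^2$, and, crucially, the second-order a priori estimate \eqref{AX-bound} for $\mathbb{E}\||\mathcal{A}X^\epsilon_{T-s}\||^2$. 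Since each is bounded by $C(1+\|y\|^2+\||x\||_1^2)$ independently of $\epsilon$ and of $s\in[0,T]$, the $s$-integral stays finite uniformly in $\epsilon$, producing the bound $C(1+\|y\|)(1+\||\mathcal{A}x\||+\||x\||_1+\|y\|)$ for the bracketed expectation. Multiplying by the prefactor $\epsilon$ and combining with the initial-datum estimate delivers the claim.

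The main obstacle is precisely the uniform-in-$\epsilon$ control of $\mathbb{E}\||\mathcal{A}X^\epsilon_{T-s}\||^2$. The source term $\mathcal{L}_2u_1$ contains the unbounded operator $\mathcal{A}$ acting on the slow variable through the first-order part $(\mathcal{A}x+\textbf{F}(x,y),D_xu_1)_{\mathcal{H}}$, so without an $\epsilon$-free \emph{second-order} a priori bound on $X^\epsilon$ the $\epsilon$ gained in front of the integral would be absorbed by a diverging moment and the whole scheme would collapse. It is exactly the estimate \eqref{AX-bound} that removes this difficulty, which is why the hypothesis $x\in D(\mathcal{A})=\mathcal{H}^1$ is indispensable; note also that $\||\mathcal{A}x\||=\||x\||_1$ on $\mathcal{H}^1$, so the two norms appearing on the right-hand side are in fact the same quantity.
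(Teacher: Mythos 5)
Your proposal follows essentially the same route as the paper's own proof: the identical variation-of-constants representation of $r^\epsilon$, the same treatment of the initial term via $r^\epsilon(0,\cdot,\cdot)=-\epsilon u_1(0,\cdot,\cdot)$ together with \eqref{u-1-abso} and \eqref{Y-bound}, and the same bound on the integrand via Lemma \ref{4.3}, Lemma \ref{4.4}, H\"older's inequality, and the uniform moment estimates \eqref{Y-bound}, \eqref{2.6}, \eqref{AX-bound}. Your closing observation that $\||\mathcal{A}x\||=\||x\||_1$ on $\mathcal{H}^1$ and that \eqref{AX-bound} is the crucial $\epsilon$-free ingredient is correct and consistent with the paper's argument.
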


\section{Appendix}
In this section, we state and prove some technical lemmas needed in
the former sections.
\begin{lemma}
For any $x\in \mathcal{H}$ and $y\in H$, there exists a constant
$C>0$ such that
\begin{eqnarray}
\||\bar{\bf{F}}(x)-\mathbb{E}[\textbf{F}(x,Y_t(y))]\||^2_1 \leq
Ce^{-\eta t}\Big(1 +\|y\|^2\Big),\label{Averaging-Expectation}
\end{eqnarray}
where $\eta= \alpha_1-L_g >0.$
\begin{proof}
According to the invariant property of $\mu $,
\eqref{mu-Momenent-bound} and hypothesis \eqref{F-condi-1}, we have
\begin{eqnarray*}
\||\bar{\bf{F}}(x)-\mathbb{E}[\textbf{F}(x,Y_t(y))]\||^2_1
&=& \|\bar{ {F}}(\Pi_1x)- \mathbb{E}[{F}(\Pi_1x,Y_t(y))]\|^2\\
&=&\|\int_HF(\Pi_1 x,z)\mu(dz)- \mathbb{E}[{F}(\Pi_1x,Y_t(y))]\|^2\\
&=&\|\int_H \mathbb{E}[F(\Pi_1x,
Y_t(z))-F(\Pi_1x,Y_t(y))]\mu(dz)\|^2,
\end{eqnarray*}
so thanks to \eqref{mu-Momenent-bound} and \eqref{initial-diff}, we
have
\begin{eqnarray*}
\||\bar{\textbf{F}}(x)-\mathbb{E}[\textbf{F}(x,Y_t(y))]\||_1^2&\leq&
C\int_H\mathbb{E}\left\|Y_t(y)-Y_t(z)\right\|^2\mu (dz)\nonumber\\
\nonumber&\leq&Ce^{-\eta t}\int_H\|y-z\|^2\mu (dz)\\
&\leq&Ce^{-\eta t}\Big(1 +\|y\|^2\Big).
\end{eqnarray*}
\end{proof}
\end{lemma}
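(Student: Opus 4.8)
The plan is to collapse the $\mathcal{H}^1$-norm to a scalar $L^2(D)$ estimate and then combine the invariance of $\mu$ with the exponential contraction \eqref{initial-diff} of the fast flow. Writing $u:=\Pi_1 x$ and recalling that both $\bar{\bf{F}}(x)=(0,\bar{F}(u))^T$ and $\textbf{F}(x,y)=(0,F(u,y))^T$ have vanishing first component, the definition of the product norm $\||\cdot\||_1$ gives
\[
\||\bar{\bf{F}}(x)-\mathbb{E}[\textbf{F}(x,Y_t(y))]\||_1^2=\|\bar{F}(u)-\mathbb{E}[F(u,Y_t(y))]\|^2,
\]
so it suffices to estimate the right-hand $H$-norm.

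First I would rewrite the averaged drift by invariance. Since $\mu$ is an invariant probability measure for $P_t$, applying the invariance identity to the observable $z\mapsto F(u,z)$ (coordinatewise against the basis $\{e_k\}$) yields $\bar{F}(u)=\int_H F(u,z)\mu(dz)=\int_H \mathbb{E}[F(u,Y_t(z))]\mu(dz)$. Because $\mathbb{E}[F(u,Y_t(y))]$ is constant in $z$ and $\mu$ is a probability measure, subtracting it under the integral produces the representation
\[
\bar{F}(u)-\mathbb{E}[F(u,Y_t(y))]=\int_H\mathbb{E}\big[F(u,Y_t(z))-F(u,Y_t(y))\big]\mu(dz).
\]

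Next I would push the $H$-norm through the two probability integrals. Jensen's inequality, applied successively for $\mu$ and for $\mathbb{E}$, bounds the squared norm by $\int_H\mathbb{E}\|F(u,Y_t(z))-F(u,Y_t(y))\|^2\mu(dz)$; the Lipschitz hypothesis \eqref{F-condi-1} in the second argument then replaces the integrand by $L_F^2\,\mathbb{E}\|Y_t(z)-Y_t(y)\|^2$, and the contraction estimate \eqref{initial-diff} bounds this by $L_F^2 e^{-\eta t}\|z-y\|^2$. Finally, using $\|z-y\|^2\le 2\|z\|^2+2\|y\|^2$ together with the finite-moment bound \eqref{mu-Momenent-bound} to absorb $\int_H\|z\|^2\mu(dz)\le C$ gives the factor $C(1+\|y\|^2)$ and hence the claimed bound with $\eta=\alpha_1-L_g>0$. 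Keeping everything squared throughout is what reproduces the exponent $e^{-\eta t}$ exactly, with no loss to $e^{-\eta t/2}$.

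The only step requiring care is the invariance rewrite: the identity $\int_H P_t\psi\,d\mu=\int_H\psi\,d\mu$ is stated for scalar $\psi$, whereas here $F(u,\cdot)$ is $H$-valued, so I would justify it either coordinatewise in the eigenbasis $\{e_k\}$ or by extending it to $H$-valued bounded Lipschitz observables by linearity. Everything else is a routine chain of Jensen's inequality, the Lipschitz continuity of $F$, and the two a priori estimates \eqref{initial-diff} and \eqref{mu-Momenent-bound} already established above.
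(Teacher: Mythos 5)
Your proposal is correct and follows essentially the same route as the paper's proof: collapse the $\mathcal{H}^1$-norm to the $H$-norm of the second component, rewrite $\bar{F}$ via the invariance of $\mu$, then chain Jensen's inequality, the Lipschitz condition \eqref{F-condi-1}, the contraction \eqref{initial-diff}, and the moment bound \eqref{mu-Momenent-bound}. Your explicit treatment of the two Jensen steps and of the vector-valued invariance identity (which the paper applies tacitly) only makes the argument more careful, not different.
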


Next, we introduce the following regularity results of averaging
function $\bar{F}$.
\begin{lemma}\label{regular-lemma}
For any $w\in H$, the function $\Big(\bar{F}(\cdot), w\Big)_H:\;
H\rightarrow \mathbb{R}$ is G\^{a}teaux differential and for any
$v\in H$, it hold that
\begin{eqnarray*}
\Big(D\bar{F}(u)\cdot v, w\Big)_H=\int_H\Big(D_u F(u,y)\cdot v,
w\Big)_H\mu(dy),\;w\in H.
\end{eqnarray*}
\begin{proof}
For any $\lambda \neq0$ we have
\begin{eqnarray}
&&\Big(\int_H\frac{1}{\lambda}[F(u+\lambda
v,y)-F(u,y)]\mu(dy),w\Big)_H-\int_H\Big(D_uF(u,y)\cdot
v,w\Big)_H\mu(dy)\nonumber\\
&&=\int_H \Big(\frac{1}{\lambda}[F(u+\lambda
v,y)-F(u,y)]-D_uF(u,y)\cdot v,w\Big)_H\mu(dy).\nonumber
\end{eqnarray}
and then
\begin{eqnarray}
&&\!\!\!\left|\Big(\int_H\frac{1}{\lambda}[F(u+\lambda
v,y)-F(u,y)]\mu(dy),w\Big)_H-\int_H\Big(D_uF(u,y)\cdot
v,w\Big)_H\mu(dy)\right|\nonumber\\
&&\!\!\!\leq\int_H \left|\Big(\frac{1}{\lambda}[F(u+\lambda
v,y)-F(u,y)]-D_uF(u,y)\cdot v,w\Big)_H\right|\mu(dy)\nonumber\\
&&\!\!\!\leq\|w\|\int_H  \|\frac{1}{\lambda}[F(u+\lambda
v,y)-F(u,y)]-D_uF(u,y)\cdot v\| \mu(dy).\label{regular-1}
\end{eqnarray}
Now, since $F(\cdot, y): H\rightarrow H$ is G\^{a}teaux
differentiable in $H$, for any $h\in H$, we obtain
\begin{eqnarray}
\lim\limits_{\lambda\rightarrow 0}\|\frac{1}{\lambda}[F(u+\lambda
v,y)-F(u,y)]-D_uF(u,y)\cdot v\|=0.\label{regular-2}
\end{eqnarray}
Moreover, by mean value theorem,
\begin{eqnarray}
&&\frac{1}{\lambda}[F(u+\lambda v,y)-F(u,y)]-D_uF(u,y)\cdot
v\nonumber\\
&&=\int_0^1[D_u F(u+\lambda \theta v)-D_uF(u,y)]\cdot v
d\theta\nonumber
\end{eqnarray}
so that, due to the boundedness of $D_uF(u,y)$, we get
\begin{eqnarray*}
\|\frac{1}{\lambda}[F(u+\lambda v,y)-F(u,y)]-D_uF(u,y)\cdot v\|\leq
C \|v\|.
\end{eqnarray*}
and then, by using the dominated convergence theorem, taking
\eqref{regular-2} into account, we can conclude
\begin{eqnarray}
&&\lim\limits_{\lambda\rightarrow
0}\Big(\int_H\frac{1}{\lambda}[F(u+\lambda
v,y)-F(u,y)]\mu(dy),w\Big)_H\nonumber\\
&&=\int_H\Big(D_uF(u,y)\cdot v,w\Big)_H\mu(dy),\nonumber
\end{eqnarray}
which implies that
\begin{eqnarray*}
\Big(D\bar{F}(u)\cdot v, w\Big)_H=\int_H\Big(D_u F(u,y)\cdot
v,w\Big)_H\mu(dy).
\end{eqnarray*}
\end{proof}
\end{lemma}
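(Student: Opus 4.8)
The plan is to show that $\bar{F}$ may be differentiated under the integral sign, so that the Gâteaux derivative of $(\bar{F}(\cdot),w)_H$ is obtained by integrating the derivative of $F(\cdot,y)$ against $\mu$. First I would fix $u,v,w\in H$ and $\lambda\neq 0$ and form the difference quotient of $(\bar{F}(\cdot),w)_H$ at $u$ in the direction $v$. Using the definition $\bar{F}(u)=\int_H F(u,y)\mu(dy)$ together with the linearity of the inner product, I would pull the quotient inside the $\mu$-integral and subtract the candidate derivative, reducing the claim to
\begin{equation*}
\lim_{\lambda\to0}\int_H\Big(\frac{1}{\lambda}[F(u+\lambda v,y)-F(u,y)]-D_uF(u,y)\cdot v,\;w\Big)_H\mu(dy)=0.
\end{equation*}

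Next I would justify passing the limit inside the integral via the dominated convergence theorem. The pointwise convergence (for fixed $y$) of the integrand to $0$ is immediate from the assumed well-definedness of the directional derivative $D_uF(u,y)$ in Hypothesis 2, i.e. from the Gâteaux differentiability of $F(\cdot,y)$ in the first variable. For the dominating function I would invoke the mean value theorem to write $\frac{1}{\lambda}[F(u+\lambda v,y)-F(u,y)]=\int_0^1 D_uF(u+\lambda\theta v,y)\cdot v\,d\theta$, and then apply the uniform bound \eqref{F'}, namely $\|D_uF(\cdot,y)\cdot v\|\leq L\|v\|$, to bound the integrand in absolute value by $2L\|v\|\,\|w\|$, a constant independent of both $\lambda$ and $y$. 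Since $\mu$ is a probability measure this constant is $\mu$-integrable, so dominated convergence applies and the limit may be taken inside.

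Carrying this out yields that the difference quotient converges to $\int_H (D_uF(u,y)\cdot v,w)_H\mu(dy)$, which is precisely $(D\bar{F}(u)\cdot v,w)_H$ after identifying $D\bar{F}(u)\cdot v$ with the Bochner integral $\int_H D_uF(u,y)\cdot v\,\mu(dy)$. The only delicate point I anticipate is the domination step: one must verify that the constant $L$ in \eqref{F'} is genuinely uniform in $y$ (it is, by hypothesis) and that $\mu$ has finite total mass so that a constant dominator is integrable, which holds since $\mu$ is an invariant probability measure (indeed with finite second moment by \eqref{mu-Momenent-bound}). Everything else is a routine application of the dominated convergence theorem.
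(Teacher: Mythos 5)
Your proposal is correct and follows essentially the same route as the paper's proof: pull the difference quotient inside the $\mu$-integral, obtain pointwise convergence from the G\^{a}teaux differentiability of $F(\cdot,y)$, and dominate via the mean value theorem together with the uniform bound \eqref{F'} so that the dominated convergence theorem applies. The only cosmetic difference is that you keep the pairing with $w$ inside the dominated integrand (bounding by $2L\|v\|\|w\|$) while the paper first factors out $\|w\|$ by Cauchy--Schwarz; these are equivalent.
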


\begin{remark}
As a as a consequence of Lemma \ref{regular-lemma}, it is easily to
check that
\begin{eqnarray}
\Big(D_x\bar{\bf{F}}(x)\cdot  h, k\Big)_\mathcal{H}=\int_H\Big(D_x
\textbf{F}(x,y)\cdot h,k\Big)_\mathcal{H}\mu(dy), \;h,k\in
\mathcal{H}, \label{6.3}
\end{eqnarray}
and this yields
\begin{eqnarray}
\||D_x\bar{\bf{F}}(x)\cdot h\|| &\leq& \int_H\||D_x
\textbf{F}(x,y)\cdot h\||\mu(dy)\nonumber\\
&=&\int_H\|A^{-\frac{1}{2}}[(D_uF)(\Pi_1\circ x,y)\cdot(\Pi_1\circ
h)]\| \mu(dy)\nonumber\\
&\leq&C\int_H\| (D_uF)(\Pi_1\circ x,y)\cdot(\Pi_1\circ h) \|
\mu(dy).\nonumber
\end{eqnarray}
Moreover, by invoking conditions \eqref{F'}, we get
\begin{eqnarray}
\||D_x\bar{\bf{F}}(x)\cdot h\|| &\leq&C \||h\||,\; h\in
\mathcal{H}.\label{6.4}
\end{eqnarray}

 As far as the higher order derivative are concerned, by proceeding
as in the proof of above lemma, we can show that
\begin{eqnarray*}
\Big(D_{uu}^2\bar{F}(u)\cdot(v,w), \nu\Big)_H=\int_H\Big(D_{uu}
F(u,y)\cdot(v,w),\nu\Big)_H\mu(dy), \;v,w,\nu\in H.\label{regular-3}
\end{eqnarray*}
As a consequence, we obtain
\begin{eqnarray}
\Big(D_{xx}^2\bar{\bf{F}}(x)\cdot(h,k), l\Big)_\mathcal
{H}=\int_H\Big(D_{xx}^2 {\bf{F}}(x,y)\cdot(h,k),l\Big)_\mathcal
{H}\mu(dy), \;h,k,l\in \mathcal {H} \label{regular-4}
\end{eqnarray}
and
\begin{eqnarray}\label{5.7}
\||D_{xx}^2\bar{\bf{F}}(x)\cdot(h,k)\|\leq
C\||h\||\cdot\||k\||,\;h,k\in \mathcal{H}.
\end{eqnarray}
\end{remark}

\begin{lemma}\label{ux}
For any $T>0$, there exists $C_T>0$ such that for any
$x\in\mathcal{H}$ and $t\in [0, T]$, we have
$$\|D_{x}\bar{u}(t,x)\|\leq C_{T}.$$
\end{lemma}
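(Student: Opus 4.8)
The plan is to differentiate $\bar u(t,x)=\mathbb{E}\Phi(\bar X_t(x))=\mathbb{E}\phi(\Pi_1\bar X_t(x))$ with respect to the initial datum $x$ and to reduce the desired bound to a uniform estimate on the first variation of the solution map $x\mapsto\bar X_t(x)$. First I would introduce the directional derivative $\eta_t^{h,x}:=D_x\bar X_t(x)\cdot h$ for $h\in\mathcal H$, so that the chain rule gives
\begin{eqnarray*}
D_x\bar u(t,x)\cdot h=\mathbb{E}\big[\phi'(\Pi_1\bar X_t(x))\cdot(\Pi_1\eta_t^{h,x})\big],
\end{eqnarray*}
exactly as in the computation preceding Lemma \ref{4.3}. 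Since the stochastic convolution $\sigma_1\int_0^t\mathcal S_{t-s}B\,dW^1_s$ does not depend on $x$, differentiating the mild equation \eqref{Aver-Integral} shows that $\eta_t^{h,x}$ solves the linear (pathwise) equation
\begin{eqnarray*}
\eta_t^{h,x}=\mathcal S_t h+\int_0^t\mathcal S_{t-s}\big[D_x\bar{\bf{F}}(\bar X_s(x))\cdot\eta_s^{h,x}\big]\,ds.
\end{eqnarray*}

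The key step is a uniform bound on $\eta_t^{h,x}$ (this is the estimate \eqref{eta-1} invoked in Lemma \ref{4.3}). Using the contractivity $\||\mathcal S_t z\||\le\||z\||$ together with the bound \eqref{6.4} on $D_x\bar{\bf{F}}$, which is uniform in the point where the derivative is evaluated, I would estimate
\begin{eqnarray*}
\||\eta_t^{h,x}\||\le\||h\||+C\int_0^t\||\eta_s^{h,x}\||\,ds,
\end{eqnarray*}
and Gronwall's lemma then yields $\||\eta_t^{h,x}\||\le e^{Ct}\||h\||\le C_T\||h\||$ for all $t\in[0,T]$, almost surely. This bound is in fact deterministic in $x$ precisely because \eqref{6.4} is independent of the base point, and since the $\mathcal H$-norm dominates the $H$-norm of the first component we obtain $\|\Pi_1\eta_t^{h,x}\|\le\||\eta_t^{h,x}\||\le C_T\||h\||$.

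To conclude, I would use that $\phi\in C_b^2(H,\mathbb{R})$, so $\|\phi'\|_\infty<\infty$, whence
\begin{eqnarray*}
|D_x\bar u(t,x)\cdot h|\le\mathbb{E}\big[\|\phi'(\Pi_1\bar X_t(x))\|\cdot\|\Pi_1\eta_t^{h,x}\|\big]\le\|\phi'\|_\infty\,C_T\||h\||.
\end{eqnarray*}
Taking the supremum over $\||h\||\le1$ and recalling the Riesz identification $D_x\bar u(t,x)\cdot h=(D_x\bar u(t,x),h)_{\mathcal H}$ gives $\|D_x\bar u(t,x)\|\le C_T$, uniformly in $x\in\mathcal H$ and $t\in[0,T]$, as claimed. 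The one point requiring care is the rigorous justification of the chain rule: one must verify that $x\mapsto\bar X_t(x)$ is Gâteaux (indeed Fréchet) differentiable in mean square with derivative $\eta_t^{h,x}$, and that differentiation commutes with $\mathbb{E}$. I expect this to be the main obstacle, but it follows from the standard theory of differentiable dependence on initial data for mild solutions of semilinear SPDEs with $C^1$, globally Lipschitz drift (here $\bar{\bf{F}}$, by \eqref{barF-lip} and the regularity established in Lemma \ref{regular-lemma}); once differentiability is in hand, the remaining estimates are routine Gronwall arguments.
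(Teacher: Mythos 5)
Your proposal is correct and follows essentially the same route as the paper: differentiate $\bar u$ via the first variation $\eta^{h,x}_t$, bound $\||\eta^{h,x}_t\||$ by combining the contractivity of $\mathcal S_t$ with the uniform bound \eqref{6.4} on $D_x\bar{\bf F}$ and Gronwall's lemma (this is exactly \eqref{eta-1}), and then use boundedness of $\phi'$ to conclude. Your write-up is in fact slightly more careful than the paper's at the final step, where the paper's displayed bound mistakenly involves $\sup_z|\Phi(z)|$ rather than the supremum of the derivative, and in flagging the justification of the chain rule, which the paper takes for granted.
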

\begin{proof}
Note that for any $h\in \mathcal{H}$,
\begin{eqnarray*}
D_x\bar{u}(t,x)\cdot
h&=&\mathbb{E}\left[D\Phi(\bar{X}_t(x))\cdot\eta ^{h,x}_t\right]\\
&=&\mathbb{E}\left(\Phi'(\bar{X}_t(x)),\eta^{h,x}_t\right)_{\mathcal{H}},
\end{eqnarray*}
where $\eta^{h,x}_t$ is the mild solution of
\begin{eqnarray*}
\begin{cases}
d\eta ^{h,x}_t=\left(\mathcal{A}\eta ^{h,x}_t+D\bar{\bf{F}}(\bar{X}_t(x))\cdot\eta ^{h,x}_t\right)dt\\
\eta ^{h,x}(0)=h.
\end{cases}
\end{eqnarray*}
This means that  $\eta ^{h,x}_t$  is the  solution of the integral
equation
\begin{eqnarray*}
\eta^{h,x}_t=\mathcal{S}_th+\int_0^t\mathcal{S}_{t-s}[D\bar{\bf{F}}(\bar{X}_s(x))\cdot\eta
^{h,x}_t]ds,
\end{eqnarray*}
and then thanks to \eqref{6.4}, we get
\begin{eqnarray*}
\||\eta ^{h,x}_t\|| \leq \||h\|| +C\int_0^t\||\eta ^{h,x}_s\|| ds.
\end{eqnarray*}
Then by  Gronwall lemma it follows that
\begin{eqnarray}
\||\eta ^{h,x}_t\|| \leq C_T\||h\||, \;t\in [0, T],  \label{eta-1}
\end{eqnarray}
which means
\begin{eqnarray*}
|D_x\bar{u}(t,x)\cdot h|\leq C_T\sup\limits_{z\in \mathcal{H}
}|\Phi(z)|\cdot\||h\||, \label{u-bar-deriv}
\end{eqnarray*}
so that
\begin{eqnarray*}
\||D_x\bar{u}(t,x)\|| \leq C_T.\label{u-bar-deriv-1-1}
\end{eqnarray*}
\end{proof}
\begin{lemma}\label{uxx}
For any $T>0$, there exists $C_T>0$ such that for any
$x,h,k\in\mathcal{H}$ and $t\in [0, T]$, we have
$$\left|D^2_{xx}\bar{u}(t,x)\cdot(h,k)\right|\leq C_{T,\phi}\||h\||\cdot\||k\||.$$

\begin{proof}
For any $h, k \in \mathcal{H}$, we have
\begin{eqnarray}
D^2_{xx}\bar{u}(t,x)\cdot(h,k)&=&\mathbb{E}\big[\Phi''(\bar{X}_t(x))\cdot(\eta^{h,x}_t,\eta^{k,x}_t)\nonumber\\
&+&\Phi'(\bar{X}_t(x))\cdot \zeta^{h,k,x}_t\big], \label{5.1}
\end{eqnarray}
where $\zeta^{h,k,x}$ is the mild solution of equation
\begin{eqnarray*}
\begin{cases}
d\zeta^{h,k,x}_t=\left[\mathcal{A}\zeta^{h,k,x}_t+ \bar{\bf{F}}''(\bar{X}_t(x))\cdot(\eta^{h,x}_t,\eta^{k,x}_t)+\bar{\bf{F}}'(\bar{X}_t(x))\cdot\zeta^{h,k,x}_t\right]dt\\
\zeta^{h,k,x}_0=0.
\end{cases}
\end{eqnarray*}
This means that  $\zeta^{h,k,x}_t$  is the  solution of the integral
equation
\begin{eqnarray*}
\zeta^{h,k,x}_t=
\int_0^t\mathcal{S}_t\big[\bar{\bf{F}}''(\bar{X}_s(x))\cdot(\eta^{h,x}_s,\eta^{k,x}_s)+\bar{\bf{F}}'(\bar{X}_s(x))\cdot\zeta^{h,k,x}_s\big]ds.
\end{eqnarray*}
Thus, by \eqref{6.4} and \eqref{5.7} we have
\begin{eqnarray*}
\||\zeta^{h,k,x}_t\||&\leq& C\int_0^t(\||\eta^{h,x}_s\||\cdot
\||\eta^{k,x}_s\||+\||\zeta^{h,k,x}_s\||)ds\\
&\leq& C\||h\||\cdot\||k\|| +C\int_0^t\||\zeta^{h,k,x}_s\||ds.
\end{eqnarray*}
By applying the Gronwall lemma we have
\begin{equation*}
\||\zeta^{h,k,x}_t\||\leq C_T\||h\||\cdot \||k\||, \; t>0.
\end{equation*}
Returning to \eqref{5.1}, we can get
\begin{eqnarray}
|D_{xx}\bar{u}(t,x)\cdot(h,k)|\leq C \||h\||\cdot\||k\||.
\end{eqnarray}
\end{proof}
\end{lemma}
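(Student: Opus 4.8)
The plan is to differentiate the representation $\bar{u}(t,x)=\mathbb{E}\Phi(\bar{X}_t(x))$ twice with respect to the initial datum $x$ and to control the resulting first- and second-order variation processes by Gronwall-type estimates. First I would recall, for a direction $h\in\mathcal{H}$, the first variation $\eta^{h,x}_t=D_x\bar{X}_t(x)\cdot h$ already constructed in the proof of Lemma \ref{ux}, which obeys the uniform bound \eqref{eta-1}, namely $\||\eta^{h,x}_t\||\leq C_T\||h\||$ on $[0,T]$. Differentiating the mild equation for $\bar{X}_t(x)$ once more and using that $\bar{\bf{F}}$ is twice differentiable with the bounds \eqref{6.4} and \eqref{5.7}, the second variation $\zeta^{h,k,x}_t=D^2_{xx}\bar{X}_t(x)\cdot(h,k)$ solves the linear inhomogeneous mild equation
\[
\zeta^{h,k,x}_t=\int_0^t\mathcal{S}_{t-s}\big[\bar{\bf{F}}''(\bar{X}_s(x))\cdot(\eta^{h,x}_s,\eta^{k,x}_s)+\bar{\bf{F}}'(\bar{X}_s(x))\cdot\zeta^{h,k,x}_s\big]\,ds .
\]
Since $\phi\in C_b^2(H)$, the lifted function $\Phi(x)=\phi(x_1)$ lies in $C_b^2(\mathcal{H})$, and the chain rule yields the representation
\[
D^2_{xx}\bar{u}(t,x)\cdot(h,k)=\mathbb{E}\big[\Phi''(\bar{X}_t(x))\cdot(\eta^{h,x}_t,\eta^{k,x}_t)\big]+\mathbb{E}\big[\Phi'(\bar{X}_t(x))\cdot\zeta^{h,k,x}_t\big].
\]

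Next I would estimate the two terms separately. For the first term, boundedness of $\Phi''$ together with the linear bounds $\||\eta^{h,x}_t\||\leq C_T\||h\||$ and $\||\eta^{k,x}_t\||\leq C_T\||k\||$ from \eqref{eta-1} immediately gives a contribution of order $C_T\||h\||\cdot\||k\||$. For the second term the key quantitative step is a uniform estimate on $\zeta^{h,k,x}_t$. Applying the contraction property $\||\mathcal{S}_t z\||\leq\||z\||$ to the mild equation above, bounding the source term by \eqref{5.7} and \eqref{eta-1} and the $\zeta$-term by \eqref{6.4}, one arrives at
\[
\||\zeta^{h,k,x}_t\||\leq C\||h\||\cdot\||k\||+C\int_0^t\||\zeta^{h,k,x}_s\||\,ds,\qquad t\in[0,T],
\]
so that Gronwall's lemma produces $\||\zeta^{h,k,x}_t\||\leq C_T\||h\||\cdot\||k\||$. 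Boundedness of $\Phi'$ then controls the second term by $C_T\||h\||\cdot\||k\||$, and summing the two contributions gives the claimed bound.

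I expect the main obstacle to lie not in the final Gronwall bookkeeping but in justifying that $x\mapsto\bar{X}_t(x)$ is genuinely twice Fr\'{e}chet differentiable in $\mathcal{H}$, with $\zeta^{h,k,x}_t$ the correct second derivative. This rests on the $C^2$-regularity and uniform boundedness of the derivatives of the averaged nonlinearity $\bar{\bf{F}}$, which follow from Lemma \ref{regular-lemma} and the differentiation-under-the-integral-sign arguments leading to \eqref{6.4} and \eqref{5.7}; one must check that the formally differentiated mild equations for $\eta$ and $\zeta$ actually admit mild solutions in $\mathcal{H}$ and that the difference quotients of $\bar{X}_t$ converge to them, via a standard fixed-point plus continuity-in-initial-data argument. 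Once this differentiability is secured, every estimate closes through the contraction of $\mathcal{S}_t$ and Gronwall's lemma, and the bounds are uniform in $x$ precisely because \eqref{6.4}, \eqref{5.7} and \eqref{eta-1} are independent of $x$.
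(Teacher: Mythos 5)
Your proposal is correct and follows essentially the same route as the paper: the chain-rule representation of $D^2_{xx}\bar{u}$ via the first and second variation processes $\eta^{h,x}_t$ and $\zeta^{h,k,x}_t$, the bounds \eqref{6.4}, \eqref{5.7}, \eqref{eta-1}, and a Gronwall argument on the mild equation for $\zeta^{h,k,x}_t$ (where you even write the semigroup factor correctly as $\mathcal{S}_{t-s}$, fixing a typo in the paper's display). Your closing remark on justifying second-order Fr\'{e}chet differentiability of $x\mapsto\bar{X}_t(x)$ is a point the paper glosses over, but it does not change the argument.
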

By proceeding again as in the proof of above lemma,  we have the
following result.
\begin{lemma}\label{uxxx}
For any $T>0$, there exists $C_T>0$ such that for any
$x,h,k,l\in\mathcal{H}$ and $t\in [0, T]$, we have
$$D^3_{xxx}\bar{u}(t,x)\cdot(h,k,l)\leq C_{T,\phi}\||h\||\cdot\||k\||\cdot\||l\||.$$
\end{lemma}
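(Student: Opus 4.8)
Lemma \ref{uxxx} asserts a uniform-in-time bound on the third Fréchet derivative of the averaged value function $\bar{u}(t,x)=\mathbb{E}\Phi(\bar{X}_t(x))$. The statement is the natural continuation of Lemmas \ref{ux} and \ref{uxx}, and the text explicitly says it is proved "by proceeding again as in the proof of above lemma." My plan is therefore to differentiate the flow of the averaged equation \eqref{Abstract-aver-equa} three times with respect to the initial datum $x$, introduce the third-order variation process, and close the estimate with Gronwall's inequality, exactly paralleling the two preceding lemmas.

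Let me carry this out concretely. First I would differentiate $\bar{u}(t,x)=\mathbb{E}\Phi(\bar{X}_t(x))$ three times in $x$. Writing $\eta^{h,x}_t, \eta^{k,x}_t, \eta^{l,x}_t$ for the first variations (solving the linearized equation as in Lemma \ref{ux}), $\zeta^{h,k,x}_t$ for the second-order variation (as in Lemma \ref{uxx}), and $\theta^{h,k,l,x}_t$ for the third-order variation, the chain rule gives
\begin{eqnarray*}
D^3_{xxx}\bar{u}(t,x)\cdot(h,k,l)
&=&\mathbb{E}\big[\Phi'''(\bar{X}_t(x))\cdot(\eta^{h,x}_t,\eta^{k,x}_t,\eta^{l,x}_t)\big]\\
&&+\mathbb{E}\big[\Phi''(\bar{X}_t(x))\cdot(\zeta^{h,k,x}_t,\eta^{l,x}_t)\big]+(\text{two symmetric terms})\\
&&+\mathbb{E}\big[\Phi'(\bar{X}_t(x))\cdot\theta^{h,k,l,x}_t\big].
\end{eqnarray*}
Here $\Phi'''$ is available because $\phi\in C_b^3(H)$, which is precisely the hypothesis of Theorem~4.1 (and stronger than the $C_b^2$ regularity needed in the earlier lemmas). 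The first term is controlled by $\|\Phi'''\|_\infty\,\||\eta^{h,x}_t\||\,\||\eta^{k,x}_t\||\,\||\eta^{l,x}_t\||$, the middle terms by the bounds on $\Phi''$ together with the first- and second-order variation estimates \eqref{eta-1} and the $\||\zeta^{h,k,x}_t\||\le C_T\||h\||\,\||k\||$ bound from Lemma \ref{uxx}.

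The only genuinely new object is the third-order variation $\theta^{h,k,l,x}_t$. Differentiating the equation for $\zeta^{h,k,x}_t$ once more in the direction $l$ produces the linear mild equation
\begin{eqnarray*}
\theta^{h,k,l,x}_t&=&\int_0^t\mathcal{S}_{t-s}\Big[\bar{\bf{F}}'''(\bar{X}_s(x))\cdot(\eta^{h,x}_s,\eta^{k,x}_s,\eta^{l,x}_s)
+\bar{\bf{F}}''(\bar{X}_s(x))\cdot(\zeta^{h,k,x}_s,\eta^{l,x}_s)\\
&&\quad+\bar{\bf{F}}''(\bar{X}_s(x))\cdot(\zeta^{h,l,x}_s,\eta^{k,x}_s)+\bar{\bf{F}}''(\bar{X}_s(x))\cdot(\zeta^{k,l,x}_s,\eta^{h,x}_s)
+\bar{\bf{F}}'(\bar{X}_s(x))\cdot\theta^{h,k,l,x}_s\Big]ds.
\end{eqnarray*}
Using the contractivity $\||\mathcal{S}_t\||\le 1$, the bound \eqref{6.4} on $\bar{\bf{F}}'$, the bound \eqref{5.7} on $\bar{\bf{F}}''$, and the variation estimates \eqref{eta-1} and Lemma \ref{uxx}, every inhomogeneous term is bounded by $C\||h\||\,\||k\||\,\||l\||$, so that
\begin{eqnarray*}
\||\theta^{h,k,l,x}_t\||\le C_T\||h\||\,\||k\||\,\||l\||+C\int_0^t\||\theta^{h,k,l,x}_s\||\,ds,
\end{eqnarray*}
and Gronwall's lemma yields $\||\theta^{h,k,l,x}_t\||\le C_T\||h\||\,\||k\||\,\||l\||$ on $[0,T]$. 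Substituting all these bounds into the expansion for $D^3_{xxx}\bar{u}$ and using that $\Phi',\Phi'',\Phi'''$ are bounded gives the claimed estimate.

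The main obstacle is bookkeeping rather than analysis: one must make sure a uniform bound $\||\bar{\bf{F}}'''(x)\cdot(h,k,l)\||\le C\||h\||\,\||k\||\,\||l\||$ on the \emph{third} derivative of the averaged nonlinearity is available. The excerpt supplies the analogous first- and second-order facts \eqref{6.4} and \eqref{5.7} via Lemma \ref{regular-lemma}, but the third-order version is not written out, so I would first record it—differentiating $\bar{F}(u)=\int_H F(u,y)\mu(dy)$ once more under the integral sign as in Lemma \ref{regular-lemma} and invoking a $C^3_b$-in-$u$ bound on $F(\cdot,y)$ (the natural strengthening of Hypothesis~2, consistent with the remark's example $F(u,y)=F_1(u)+F_2(y)$). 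Once that structural estimate is in hand, the Gronwall argument is entirely routine and identical in spirit to Lemma \ref{uxx}.
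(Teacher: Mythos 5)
Your proposal is correct and coincides with the paper's intended argument: the paper offers no written proof of this lemma, saying only that it follows ``by proceeding again as in the proof of above lemma,'' which is exactly the scheme you carry out — chain-rule expansion with $\Phi'$, $\Phi''$, $\Phi'''$, the third-order variation process $\theta^{h,k,l,x}_t$, and Gronwall. Your closing remark is also well taken: the argument genuinely needs the structural bound $\||\bar{\bf{F}}'''(x)\cdot(h,k,l)\||\leq C\||h\||\cdot\||k\||\cdot\||l\||$, hence a bounded third $u$-derivative of $F$ (together with $\phi\in C_b^3$), neither of which is supplied by Hypothesis 2 --- a gap in the paper's stated hypotheses that your write-up correctly identifies and repairs.
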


\section*{Acknowledgments}
We would like to thank Professor Jinqiao Duan for helpful
discussions and comments. Hongbo Fu is supported by CSC scholarship
(No. [2015]5104), NSF  of China (Nos. 11301403, 11405118, 11271295)
and Foundation of Wuhan Textile University 2013. Li Wan is supported
by NSF  of China (No. 11271295) and Science and Technology Research
Projects of Hubei Provincial Department of Education (No.
D20131602). Jicheng Liu is supported by NSF  of China (Nos.
11271013, 10901065). Xianming Liu is  supported by NSF  of China
(No. 11301197)

\label{}










\end{document}